\theoremstyle{plain}
\newtheorem{theorem}{Theorem}[section]
\newtheorem{lemma}[theorem]{Lemma}
\newtheorem{proposition}[theorem]{Proposition}
\newtheorem{theorem-definition}[theorem]{Theorem-Definition}
\theoremstyle{definition}
\newtheorem{definition}[theorem]{Definition}
\theoremstyle{remark}
\newtheorem{example}[theorem]{Example}
\newtheorem{remark}[theorem]{Remark}
\numberwithin{equation}{section}
\newcommand{\la}{{\lambda}}
\newcommand{\cC}{{\mathcal{C}}}
\newcommand{\N}{{\mathds{N}}}
\newcommand{\Q}{{\mathds{Q}}}
\newcommand{\R}{{\mathds{R}}}
\newcommand{\C}{{\mathds{C}}}
\newcommand{\A}{{\mathcal{A}}}
\newcommand{\B}{{\mathcal{B}}}
\newcommand{\M}{{\mathcal{M}}}
\newcommand{\op}{\mathrm{op}}
\newcommand{\diag}{\mathrm{diag}}
\renewcommand{\geq}{\geqslant}
\renewcommand{\leq}{\leqslant}
\newcommand{\vast}{\bBigg@{4}}
\newcommand{\Vast}{\bBigg@{5}}
\begin{document}

\title[Frobenius--Rieffel norms]{  Frobenius--Rieffel norms on finite-dimensional C*-algebras}
\author{Konrad Aguilar}
\address{Department of Mathematics and Statistics, Pomona College, 610 N. College Ave., Claremont, CA 91711} 
\email{konrad.aguilar@pomona.edu}
\urladdr{\url{https://aguilar.sites.pomona.edu/}}
\thanks{The first author gratefully acknowledges the financial support  from the Independent Research Fund Denmark  through the project `Classical and Quantum Distances' (grant no.~9040-00107B)}

\author[S.R.~Garcia]{Stephan Ramon Garcia}
\address{Department of Mathematics and Statistics, Pomona College, 610 N. College Ave., Claremont, CA 91711} 
\email{stephan.garcia@pomona.edu}
\urladdr{\url{http://pages.pomona.edu/~sg064747}}
\thanks{The second author is partially supported by NSF grants DMS-1800123 and DMS-2054002}

\author{Elena Kim}
\address{Department of Mathematics, Massachusetts Institute of Technology, 77 Massachusetts Ave, Cambridge, MA 02139}
\email{elenakim@mit.edu}
\urladdr{}
\thanks{The third author is supported by the National Science Foundation Graduate Research Fellowship under Grant No. 1745302}

\date{\today}
\subjclass[2000]{Primary:  46L89, 46L30, 58B34.}
\keywords{C*-algebras, matrix algebras, Frobenius norms, equivalence constants, conditional expectations, Effros--Shen algebras,  quantum metric spaces, continued fractions}

\begin{abstract}
In 2014,  Rieffel introduced norms on certain unital C*-algebras built from conditional expectations onto unital C*-subalgebras. We   begin by showing that these norms generalize the Frobenius norm, and we provide explicit formulas for certain conditional expectations onto unital C*-subalgebras  of finite-dimensional C*-algebras. This allows us compare  these norms to the  operator norm by finding explicit equivalence constants. In particular, we find equivalence constants for the standard finite-dimensional C*-subalgebras of the Effros--Shen algebras that vary continuously with respect to their given irrational parameters.
\end{abstract}
\maketitle

\section{Introduction}
A main goal of noncommutative metric geometry is to establish the convergence  of spaces arising in the physics or operator-algebra literature \cite{Rieffel00,Rieffel01, Latremoliere13c, Junge18, Kaad20, Latremoliere20}. To accomplish this, one must equip operator algebras with compact quantum metrics, which were introduced by Rieffel \cite{Rieffel98a,Rieffel99} and motivated by work of Connes \cite{Connes89, Connes}. Then,  convergence of   compact quantum metric spaces is proven with quantum analogues of the Gromov--Hausdorff distance \cite{Rieffel00,Latremoliere13, Latremoliere13b, Kerr09, Li06, Wu}.

 In \cite{Aguilar-Latremoliere15}, the first author and Latr\'emoli\`ere  recently exhibited the convergence of quantum metric spaces built from   approximately finite-dimensional C*-algebras (AF algebras) and, in particular, convergence of the Effros--Shen algebras \cite{Effros80b} with respect to their irrational parameters. Quantum metric spaces are obtained by endowing
unital C*-algebras with a type of seminorm whose properties are inspired
by the Lipschitz seminorm. A property, which is not needed in \cite{Aguilar-Latremoliere15}, but
appears desirable in other context \cite{Rieffel10}, is   called the  strongly Leibniz property. A seminorm $s$ satisfies the {\em strongly Leibniz property} on an operator algebra $\A$ if  
 \[
 s(A^{-1})\leq s(A) \cdot \|A^{-1}\|_\mathrm{op}^{2}
 \]
  for all invertible $A \in \A$, in which $\|\cdot\|_\mathrm{op}$ is the operator norm. 
 This can be seen as a noncommutative analogue of the   quotient rule for derivatives. Although the authors of \cite{Aguilar-Latremoliere15} were able to prove their results without this property, Rieffel’s work on module
convergence over the sphere \cite{Rieffel10} uses the strong Leibniz property, and it
can be expected to play a role in the study of module convergence in
general. 
 
 Let $T$ be a topological space and let $t \in T$. The reason that the seminorms in \cite{Aguilar-Latremoliere15} do not likely satisfy the strongly Leibniz rule is  because the  seminorms are of the form
 \[
   A \longmapsto \|A-P_t(A)\|_\mathrm{op},
 \]
 where $A$ is an element of the C*-algebra $\A$, $\B$ is a C*-subalgebra of $\A$, and  $P_t:\A\rightarrow \B$ is a certain surjective linear map called a  faithful conditional expectation. But conditional expectations are rarely multiplicative (otherwise, the strongly Leibniz property of this seminorm would come for free). Rather than replace $P_t$, which provides crucial estimates, Rieffel provided another option in \cite[Section 5]{Rieffel12} following his previous work in \cite{Rieffel74}: replace the operator norm  with one induced by $P_t$ and the subalgebra $\B$.  For  $A \in \A$, the {\em  Frobenius--Rieffel} norm is
 \[
 \|A\|_{P_t}=\sqrt{\|P_t(A^*A)\|_\mathrm{op}},
 \]
 where $A^*$ is the adjoint of $A$. If we define 
 \[
s_{P_t}: A \longmapsto \|A-P_t(A)\|_{P_t},
 \]
 then $s_{P_t}$ is a seminorm that 
 is strongly Leibniz   \cite[Theorem 5.5]{Rieffel12}. 
 
 However, this replacement comes at a cost.   Following \cite{Aguilar-Latremoliere15},   we want the   family of maps $(s_{P_t})_{t \in T}$ to vary continuously (pointwise) on a particular subset of $\A$ with respect to $\|\cdot\|_\op$.  Thankfully,  in the setting of \cite{Aguilar-Latremoliere15}, one need only verify this continuity   when $\A$ is finite dimensional. In this case, $\|\cdot\|_{P_t}$ and $\|\cdot\|_\op$ are equivalent on $\A$, meaning there exist constants $\kappa_t^+, \kappa_t^->0$ such that
 \[
 \kappa_t^+ \|\cdot \|_{\op} \leq \|\cdot\|_{P_t} \leq \kappa_t^- \|\cdot\|_\op
 \] for each $t \in T$. Therefore, we can replace $s_{P_t}$ with   
 \[
 A \longmapsto \frac{1}{\kappa_t^+} s_{P_t}(A),
 \] 
 which is strongly Leibniz. However, the constants  $\kappa_t^+,\kappa_t^-$ need not change continuously with respect to $t$.  Therefore, our aim in this paper is to find explicit equivalence constants  for the  operator norm and   Frobenius--Rieffel norms on finite-dimensional C*-algebras, so that we may prove the continuity of the constants $\kappa^\pm_t$ with respect to $t \in T$. In fact, one of our main results (Theorem \ref{e:c-frac}) shows that there exist explicit equivalence constants for the finite-dimensional C*-algebras that form the Effros--Shen algebras which vary continuously with respect to the irrational parameters that determine these algebras.
 
 After some background on C*-algebras and the construction of the Frobenius--Rieffel norms, we provide some basic facts in the next section. Then, we  find equivalence constants when $\A$ is     the space of complex $n\times n$-matrices. This provides a framework for the general case of finite-dimensional C*-algebras, which we tackle next. Our main method is to represent the conditional expectations as means of unitary conjugates for some standard subalgebras, and then extend these results to all unital C*-subalgebras by showing that although the Frobenius--Rieffel norms are not unitarily invariant, their equivalence constants are.
 
 \section{Preliminaries} The facts we state about C*-algebras in this section can be found in  standard   texts such as \cite{Davidson, Murphy90, Pedersen79}.
A {\em C*-algebra} $(\A, \|\cdot\|)$ is a Banach algebra over $\C$  equipped with  a conjugate-linear anti-multiplicative involution $*: \A \rightarrow \A$ called the {\em adjoint} satisfying  the C*-identity (i.e., $
\|A^*A\|=\|A\|^2$ for all $A \in \A$). 
We say that $\A$ is {\em  unital} if it has a multiplicative identity. If two C*-algebras $\A, \B$ are *-isomorphic, then we denote this by  $\A\cong \B$.  Let $n \in \N=\{1, 2, 3, \ldots\}$. We denote the space of complex $n \times n$    matrices by $\M_n$ and its C*-norm  by  $\|\cdot\|_{\mathrm{op}}$, the operator norm induced by the 2-norm on $\C^n$. We denote the $n\times n$ identity matrix by $I_n$. For $A \in \M_n$, we let $A_{i,j} \in \C$   denote the $(i,j)$-entry of $A$ for all $i,j \in [N]$, where $[N]=\{1, 2, \ldots, N\}$.
 
\begin{example}\label{e:fd} Let $N \in \N$ and let   $d_1, d_2,\ldots, d_N \in \N$. The space
\[
\bigoplus_{k=1}^N \M_{d_k}
\]
is a unital C*-algebra with coordinate-wise operations; the norm is the maximum of the operator norms in each coordinate. If we set $n=d_1+d_2+\cdots+d_N$, then $I_n=\bigoplus_{k=1}^N I_{d_k}$ is the unit,  which we frequently denote by $I$. Every finite-dimensional C*-algebra is of this form up to *-isomorphism   \cite[Theorem III.1.1]{Davidson}.

We denote $A \in \bigoplus_{k=1}^N \M_{d_k}$ by $A=(A^{(1)}, A^{(2)}, \ldots, A^{(N)})$, so that $A^{(k)} \in \M_{d_k}$ for each $k \in [N]$ and  $A^{(k)}_{i,j} \in \C$ is the  $(i,j)$-entry of $A^{(k)}$ for all $i,j \in [d_k]$.
\end{example}

The following maps are needed for the construction of Frobenius--Rieffel norms.

\begin{definition}\label{d:ce}
Let $\A$ be a unital C*-algebra and let $\B \subseteq \A$ be a unital C*-subalgebra.  A linear function $P: \A \rightarrow \B$ is a {\em conditional expectation} if  
\begin{enumerate}[leftmargin=*]
\item $\forall B \in \B$, $P(B)=B$, and 
\item $\forall A \in \A$, $\|P(A)\|\leq \|A\|$.
\end{enumerate} 
We say that $P$ is {\em faithful} if $P(A^*A)=0\iff A=0$.
\end{definition} 
We can now define   norms induced by faithful conditional expectations.

\begin{theorem}[\!{\cite{Rieffel74}}{\cite[Section 5]{Rieffel12}}]\label{td:gen-Frob-norm}
Let $\A$ be a unital C*-algebra and $\B \subseteq \A$ be a unital C*-subalgebra. Let $P: \A \rightarrow \B$ be a faithful conditional expectation. 
For all $A \in \A$, set
\[
\|A\|_{P,\B}=\sqrt{\|P(A^*A)\|}.
\]
This defines a norm on $\A$ called the {\em  Frobenius--Rieffel norm associated to $\B$ and $P$}.
\end{theorem}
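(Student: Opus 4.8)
The plan is to verify the three defining properties of a norm for $\|A\|_{P,\B}=\sqrt{\|P(A^*A)\|}$: positive-definiteness, homogeneity, and the triangle inequality. Two of these I expect to be routine. For homogeneity, linearity of $P$ gives $P((\lambda A)^*(\lambda A))=|\lambda|^2 P(A^*A)$ for $\lambda\in\C$, so $\|\lambda A\|_{P,\B}=|\lambda|\,\|A\|_{P,\B}$. For positive-definiteness, I would note that $\|A\|_{P,\B}=0$ forces $P(A^*A)=0$, and faithfulness of $P$ (Definition \ref{d:ce}) then yields $A=0$, while the reverse implication is clear.

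The real content, and the step I expect to be the main obstacle, is the triangle inequality: since $P$ is not multiplicative one cannot expand $P((A+B)^*(A+B))$ and simplify algebraically, so the argument must instead exploit positivity. First I would record the two properties of $P$ that the proof needs. The map $P$ is \emph{positive}, carrying positive elements of $\A$ to positive elements of $\B$ --- this is the standard fact that a norm-one projection onto a C*-subalgebra is positive --- and it is \emph{unital}, since $I\in\B$ gives $P(I)=I$ by condition (1) of Definition \ref{d:ce}. In particular $P(A^*A)\ge 0$ for every $A$, so $\|A\|_{P,\B}$ really is the square root of the norm of a positive element of $\B$.

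The key idea is to linearize the norm through states and reduce to the ordinary Cauchy--Schwarz inequality. I would fix a state $\varphi$ on $\B$; because $P$ is positive and unital, the composite $\psi=\varphi\circ P$ is a state on $\A$, so the GNS form $(X,Y)\mapsto\psi(X^*Y)=\varphi(P(X^*Y))$ is positive semidefinite and $X\mapsto\varphi(P(X^*X))^{1/2}$ is a seminorm on $\A$. Its triangle inequality reads
\[
\varphi\bigl(P((A+B)^*(A+B))\bigr)^{1/2}\le \varphi(P(A^*A))^{1/2}+\varphi(P(B^*B))^{1/2}.
\]
Since $\varphi$ is a state and $P(A^*A)\ge 0$, the bound $\varphi(P(A^*A))\le\|P(A^*A)\|=\|A\|_{P,\B}^2$ holds (and similarly for $B$), so the right-hand side is at most $\|A\|_{P,\B}+\|B\|_{P,\B}$. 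Finally, because $P((A+B)^*(A+B))$ is a positive element of $\B$, its norm is the supremum of $\varphi\bigl(P((A+B)^*(A+B))\bigr)$ over all states $\varphi$ on $\B$; taking that supremum on the left gives
\[
\|A+B\|_{P,\B}=\bigl\|P((A+B)^*(A+B))\bigr\|^{1/2}\le \|A\|_{P,\B}+\|B\|_{P,\B}.
\]
The only nonelementary ingredient is the positivity of $P$; granting that, the triangle inequality follows from the facts that a state composed with a positive unital map is a state and that Cauchy--Schwarz holds in the associated GNS space.
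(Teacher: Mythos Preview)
Your argument is correct. The paper, however, does not supply its own proof of this theorem: it is stated with citations to Rieffel's work and no proof environment follows. Rieffel's original context is that of Hilbert C*-modules --- the pairing $\langle A,A'\rangle_\B=P(A^*A')$ makes $\A$ a right pre-Hilbert $\B$-module, and $\|A\|_{P,\B}$ is then the associated module norm, whose norm axioms (in particular the Cauchy--Schwarz inequality $\langle A,A'\rangle_\B^*\langle A,A'\rangle_\B\le\|\langle A,A\rangle_\B\|\,\langle A',A'\rangle_\B$) are established in that general framework.

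Your route is a self-contained specialization of that argument: rather than invoking the $\B$-valued Cauchy--Schwarz inequality, you linearize through states $\varphi$ of $\B$, reduce to the scalar GNS seminorm for $\varphi\circ P$, and recover the $\B$-norm as a supremum over states. This is exactly how one typically proves the Hilbert-module Cauchy--Schwarz inequality in the first place, so the two approaches are equivalent in spirit; yours has the advantage of not requiring the reader to know the Hilbert C*-module formalism. The appeal to Tomiyama's theorem (a contractive projection onto a unital C*-subalgebra is automatically positive) is the one external ingredient, and it is indeed needed here since Definition~\ref{d:ce} only assumes contractivity.
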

  The terminology for these norms is due to two facts: Rieffel introduced these norms \cite[Section 5]{Rieffel12} using his work that introduced spaces called Hilbert C*-modules \cite{Rieffel74}  and we show in Theorem \ref{A_1^n}    that one can recover the Frobenius norm using a particular C*-subalgebra.
 
One of the main results that makes our work in this paper possible is the fact that we can express our conditional expectations as orthogonal projections.  The key property that allows   this is the preservation of faithful tracial states. A {\em state} on a C*-algebra $\A$ is a positive linear functional $\varphi: \A \rightarrow \C$ of norm $1$. We say that $\varphi$ is {\em faithful} if $\varphi(A^*A)=0 \iff A=0$ and  {\em tracial} if   $\varphi(AB)=\varphi(BA)$ for all $A,B \in \A$.   If $\B$ is a unital C*-subalgebra and $P: \A \rightarrow \B$ is a conditional expectation onto $\B$, then we say that $P$ is {\em $\varphi$-preserving} if $\varphi \circ P=\varphi$.

\begin{example}\label{e:trace}
Let $N \in \N$ and $d_1, d_2\ldots, d_N \in \N$. Let $\mathbf{v}=(v_{1},v_2, \ldots, v_{N}) \in (0,1)^N$ such that $\sum_{k=1}^N v_{k}=1$. For every $A=(A^{(1)}, \ldots, A^{(N)}) \in 
\bigoplus_{k=1}^N \M_{d_k}$, define
\[
\tau_\mathbf{v}(A)=\sum_{k=1}^N \frac{v_{k}}{d_k} \mathrm{Tr}(A^{(k)}),
\]
where $\mathrm{Tr}$ is the trace of a matrix. 
Then $\tau_\mathbf{v}$ is a faithful tracial state on $ \bigoplus_{k=1}^N \M_{d_k} $. In fact, all faithful tracial states on $ \bigoplus_{k=1}^N \M_{n_k} $ are of this form  \cite[Example IV.5.4]{Davidson}. For $\M_n$, we have   $\mathbf{v}=(1)$. Thus, $\tau_\mathbf{v}=\frac{1}{n} \mathrm{Tr}$, and we simply denote $\tau_\mathbf{v}$ by $\tau$ in this case.
\end{example}
A faithful tracial state allows us to define an inner product on $\A$.

\begin{theorem}[\!{\cite[Proposition VIII.5.11]{Conway90}}]\label{t:trace-inner}
Let $\A$ be a unital C*-algebra  and let $\varphi : \A \rightarrow \C$ be a faithful  state. Then
\[
\langle A,B\rangle_\varphi=\varphi(B^*A)
\]  is an inner product on $\A$.
\end{theorem}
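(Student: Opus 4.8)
The plan is to verify directly the three defining properties of a (complex) inner product for the sesquilinear form $\langle A, B\rangle_\varphi = \varphi(B^*A)$: linearity in the first variable, conjugate symmetry, and positive definiteness.

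First I would dispatch linearity in the first slot. For $\alpha \in \C$ and $A_1, A_2, B \in \A$, the identity $\varphi(B^*(\alpha A_1 + A_2)) = \alpha\varphi(B^*A_1) + \varphi(B^*A_2)$ follows immediately from the linearity of $\varphi$ together with the fact that left multiplication by $B^*$ is linear. No property of $\varphi$ beyond linearity is used here.

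The conjugate symmetry $\langle A,B\rangle_\varphi = \overline{\langle B,A\rangle_\varphi}$ is where the hypothesis that $\varphi$ is a state (in particular, positive) enters. The key auxiliary fact is that every positive linear functional on a C*-algebra is self-adjoint, meaning $\varphi(C^*) = \overline{\varphi(C)}$ for all $C \in \A$. Granting this and applying it to $C = B^*A$, so that $C^* = A^*B$, gives $\varphi(A^*B) = \overline{\varphi(B^*A)}$, which rearranges to exactly $\langle A,B\rangle_\varphi = \overline{\langle B,A\rangle_\varphi}$. I expect establishing the self-adjointness of $\varphi$ to be the main (and essentially only) obstacle: one reduces to showing $\varphi$ is real on self-adjoint elements, writing an arbitrary self-adjoint element as a difference of positive elements (via its positive and negative parts, or by adding a suitable multiple of the unit) and using that $\varphi$ sends positive elements to nonnegative reals. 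Every $C \in \A$ then decomposes as $C = \frac{1}{2}(C+C^*) + i \cdot \frac{1}{2i}(C - C^*)$ into self-adjoint real and imaginary parts, from which $\varphi(C^*) = \overline{\varphi(C)}$ follows by linearity.

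Finally, positive definiteness is immediate from the two standing hypotheses on $\varphi$. Positivity of the state gives $\langle A,A\rangle_\varphi = \varphi(A^*A) \geq 0$, since $A^*A$ is a positive element; and the faithfulness hypothesis, namely $\varphi(A^*A) = 0 \iff A = 0$, supplies the definiteness, so that $\langle A,A\rangle_\varphi = 0$ forces $A = 0$. Combining the three verified properties shows that $\langle\cdot,\cdot\rangle_\varphi$ is an inner product, completing the argument.
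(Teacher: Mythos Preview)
Your proposal is correct and follows the standard textbook verification of the inner product axioms. Note that the paper does not actually supply a proof of this theorem; it simply records the result with a citation to Conway, so there is no in-paper argument to compare against, but your direct verification is exactly the routine argument one finds in such references.
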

  The following fact is well known.

\begin{theorem}[\!{\cite[Expression (4.1)]{Aguilar-Latremoliere15}}]\label{t:proj}
Let $\A$ be a unital C*-algebra, let $\B \subseteq \A$ be a unital C*-subalgebra, and let $\varphi: \A \rightarrow \C$ be a faithful tracial state. 
If $\B$ is finite dimensional, then there exists a unique $\varphi$-preserving conditional expectation $P^\varphi_\B: \A \rightarrow \B$ onto $\B$ such that given any basis  $\beta$ of $\B$ which is orthogonal with respect to $\langle \cdot, \cdot\rangle_\varphi$, we have
\[
P^\varphi_\B(A)=\sum_{B \in \beta} \frac{\langle A,B\rangle_\varphi }{\langle B,B \rangle_\varphi}B
\]
for all $A \in \A$.
\end{theorem}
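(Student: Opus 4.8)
The plan is to exhibit $P^\varphi_\B$ concretely as the orthogonal projection of the inner product space $(\A,\langle\cdot,\cdot\rangle_\varphi)$ onto $\B$, where $\langle\cdot,\cdot\rangle_\varphi$ is the inner product of Theorem \ref{t:trace-inner}. Since $\B$ is finite dimensional it admits a $\langle\cdot,\cdot\rangle_\varphi$-orthogonal basis $\beta$ by Gram--Schmidt, and for any $A\in\A$ the element
\[
P^\varphi_\B(A)=\sum_{B\in\beta}\frac{\langle A,B\rangle_\varphi}{\langle B,B\rangle_\varphi}B
\]
lies in $\B$ and satisfies $A-P^\varphi_\B(A)\perp\B$; as orthogonal projection onto a fixed finite-dimensional subspace is independent of the chosen orthogonal basis, this simultaneously defines $P^\varphi_\B$ and verifies the displayed formula for every such $\beta$. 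No completeness of $\A$ is needed precisely because $\B$ is finite dimensional. From $A-P^\varphi_\B(A)\perp\B$ I immediately read off $P^\varphi_\B(B)=B$ for $B\in\B$ and, since $I\in\B$, that $\varphi(A)=\langle A,I\rangle_\varphi=\langle P^\varphi_\B(A),I\rangle_\varphi=\varphi(P^\varphi_\B(A))$, so $P^\varphi_\B$ is $\varphi$-preserving.

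The substantive point is to upgrade this orthogonal projection to a genuine conditional expectation in the sense of Definition \ref{d:ce}, that is, to prove the operator-norm contraction $\|P^\varphi_\B(A)\|_\op\leq\|A\|_\op$, which is invisible from the Hilbert-space picture. I would route this through positivity and unitality, using the standard fact that a positive unital linear map between unital C*-algebras is automatically contractive, together with $P^\varphi_\B(I)=I$. To obtain positivity I first establish the $\B$-bimodule identity $P^\varphi_\B(B_1AB_2)=B_1P^\varphi_\B(A)B_2$ for $B_1,B_2\in\B$; this is proved by testing both sides against an arbitrary $C\in\B$ in $\langle\cdot,\cdot\rangle_\varphi$ and using $A-P^\varphi_\B(A)\perp\B$, and it is exactly here that the trace property of $\varphi$ is essential, the right-module half hinging on $\varphi(C^*AB_2)=\varphi(B_2C^*A)$. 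The same testing argument shows $P^\varphi_\B(A^*)=P^\varphi_\B(A)^*$. Granting the bimodule identity, positivity of $P^\varphi_\B(A^*A)$ follows: for each $C\in\B$ one computes
\[
\varphi\big(C^*\,P^\varphi_\B(A^*A)\,C\big)=\varphi\big(P^\varphi_\B((AC)^*(AC))\big)=\varphi\big((AC)^*(AC)\big)\geq 0,
\]
the last inequality because $\varphi$ is a state; since $\varphi$ is a faithful trace, this family of inequalities characterizes positivity of the self-adjoint element $P^\varphi_\B(A^*A)\in\B$. Thus $P^\varphi_\B$ is positive and unital, hence contractive, which completes the verification that it is a conditional expectation.

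For uniqueness, let $Q:\A\to\B$ be any $\varphi$-preserving conditional expectation onto $\B$. By Tomiyama's theorem the two axioms of Definition \ref{d:ce} already force $Q$ to be a $\B$-bimodule map, so for all $A\in\A$ and $B\in\B$,
\[
\langle Q(A),B\rangle_\varphi=\varphi\big(B^*Q(A)\big)=\varphi\big(Q(B^*A)\big)=\varphi(B^*A)=\langle A,B\rangle_\varphi,
\]
using the module property of $Q$ and then $\varphi\circ Q=\varphi$. Hence $Q(A)-A\perp\B$ with $Q(A)\in\B$, which forces $Q(A)=P^\varphi_\B(A)$ by uniqueness of the orthogonal projection.

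The main obstacle is the contraction estimate: every metric feature of $P^\varphi_\B$ is naturally phrased in the $\langle\cdot,\cdot\rangle_\varphi$-geometry, whereas axiom (2) of Definition \ref{d:ce} concerns the C*-norm $\|\cdot\|_\op$, and bridging the two is exactly what forces the detour through the bimodule identity and positivity, where the traciality of $\varphi$ does the real work.
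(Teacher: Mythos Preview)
The paper does not supply its own proof of this theorem: it is cited from \cite{Aguilar-Latremoliere15} and introduced with the phrase ``The following fact is well known,'' with no proof environment following the statement. There is therefore nothing in the paper to compare against.

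Your argument is correct and follows the standard route. Identifying $P^\varphi_\B$ with the $\langle\cdot,\cdot\rangle_\varphi$-orthogonal projection onto the finite-dimensional subspace $\B$ immediately gives the displayed formula, idempotence on $\B$, and $\varphi$-preservation. The passage from orthogonal projection to conditional expectation is handled correctly: traciality of $\varphi$ yields the $\B$-bimodule identity, which together with $\varphi$-preservation forces $\varphi(C^*P^\varphi_\B(A^*A)C)\geq 0$ for all $C\in\B$; since $\B$ is a C*-algebra, the positive and negative parts of the self-adjoint element $P^\varphi_\B(A^*A)$ lie in $\B$, and faithfulness of $\varphi$ then kills the negative part (you might state this last step explicitly rather than leaving it as ``characterizes positivity''). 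Contractivity then follows from the Russo--Dye theorem for positive unital maps. The uniqueness argument via Tomiyama's theorem is also correct.
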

In this case, we denote the associated Frobenius--Rieffel norm on $\A$ by $\|\cdot\|_{\tau,\B}$.
  Now, let $\A=\bigoplus_{k=1}^N \M_{d_k}$, let $\B \subseteq \A$ be a unital C*-subalgebra, and let $\mathbf{v}=(v_1, v_2, \ldots, v_N)\in (0,1)^N$ such that $\sum_{k=1}^Nv_k=1$.
\begin{enumerate}[leftmargin=*]
\item We denote the  conditional expectation of Theorem \ref{t:proj} induced by the faithful tracial state $\tau_\mathbf{v}$ of Example \ref{e:trace} by $P^\mathbf{v}_\B$. We denote the associated  Frobenius--Rieffel norm by $\|\cdot\|_{\mathbf{v},\B}$.

\item If $N=1$, then $\tau=\frac{1}{d_1} \mathrm{Tr}$   is the unique faithful tracial state on $\M_{d_1} $, and we denote the conditional expectation of Theorem \ref{t:proj} induced by $\tau$ by $P_\B$. We denote the associated Frobenius--Rieffel norm by $\|\cdot\|_{\B}$.
\end{enumerate}

\section{Some properties of Frobenius--Rieffel norms}

In this section, we detail the subalgebras of $\bigoplus_{k=1}^N \M_{d_k}$ that we will be working with and the conditional expectations given by  Theorem \ref{t:proj}. We   also explain   why we use ``Frobenius'' in the name of the norms  of Theorem  \ref{td:gen-Frob-norm}. 

A {\em partition} $\la$ of  $n \in \N$, denoted $\la \vdash n,$ is a tuple $\la=(n_1, n_2, \ldots, n_L) \in \N^L$, where $L \in \N$ depends on $n$ and $n=\sum_{i=1}^L n_i$. We need  the following refinement   to describe certain subalgebras of $\M_n$. 
\begin{definition}\label{d:part}
Let $n,L \in \N$. A formal expression $\la = (n_1^{m_1},n_2^{m_2},\ldots, n_{L}^{m_{L}}),$ in which $m_i, n_i \in \N$  for $1 \leq i \leq L,$ and
 \begin{equation}\label{eq:ref-p}n = \sum_{i=1}^{L} m_i n_i    \end{equation}
is a {\em refined partition} of $n$, denoted $\langle \la \vdash n\rangle$.  Write $\mathbf{m}_{\la} = (m_1,m_2, \ldots,m_{L})$ and $\mathbf{n}_{\la} = (n_1,n_2,\ldots,n_{L}),$ so that $n = \mathbf{m}_{\la} \cdot \mathbf{n}_{\la}$. The vectors $\mathbf{m}_{\la}$ and $\mathbf{n}_{\la}$ are the {\em multiplicity vector} and {\em dimension vector} of $\la$, respectively.  We drop the subscript $\la$ unless   needed for clarity. In the formal expression for $\la$, we suppress $m_i$ if $m_i=1$. The number $L=L(\la)$ of summands in \eqref{eq:ref-p} is the {\em length} of $\la$.
\end{definition}
For example, $(2^2,2), (2^3), (2,2,1^2), (3,3),(6)$ are refined partitions of $6$ with, respectively, lengths $2,1,3,2,1$;    multiplicity vectors $(2,1), (3), (1,1,2), (1,1), (1)$; and dimension vectors $(2,2), (2), (2,2,1), (3,3), (6)$.

In what follows, we use Kronecker products and direct sums. For example,     by $(I_2 \otimes \M_2)\oplus (I_1 \otimes \M_1) \subset \M_5$, we mean the subalgebra  
\[
\left\{ \mathrm{diag}(A,A, \mu)   : A \in \M_2, \mu \in \M_1\right\}
\]
of $\M_5$, 
where $\mathrm{diag}(A,A, \mu)$ is the block-diagonal matrix 
\[
\begin{bmatrix}
A & & \\
 & A & \\
  & & \mu
\end{bmatrix},
\]
with $0$s in the entries not occupied by the $A$s and $\mu$.
\begin{definition}\label{n:s-full} Let $n \in \N$ and let  $\la$ be a refined partition of $n$. The {\em subalgebra of $\M_n$ corresponding to $\langle \la \vdash n \rangle$} is 
\begin{equation}\label{eq:s-full}\B^n_\la=
\bigoplus_{i=1}^{L(\la)} \left(I_{m_i} \otimes \M_{n_i}\right).\end{equation}
We   sometimes write $\B_\la$ instead of $\B^n_\la$ when the context is clear.
\end{definition}

\begin{example}\label{e:frob-sub}
For each   $n \in \N$,  we have   $\B^n_{1^n} =I_n \otimes \M_1 \cong \M_1$ and
\[\B^n_{1, 1, \ldots, 1}= \bigoplus_{i=1}^n (I_1 \otimes  \M_1)=\{\mathrm{diag}(\mu_1, \mu_2, \ldots, \mu_n) : \mu_1, \mu_2, \ldots, \mu_n \in \C\} \cong \C^n,\]
where   $n$-copies of $1$ are in the subscript of $\B^n_{1, 1, \ldots, 1}$ and $\cong$ denotes *-isomorphism.
\end{example}
\begin{example}
Observe that 
\begin{align*}
\B^4_{1^2,2}&=(I_2 \otimes \M_1) \oplus (I_1 \otimes \M_2) =\left\{ \diag\left(\mu,\mu,A \right)   : \mu \in \C, A \in \M_2 \right\}\cong \M_1 \oplus \M_2.
\end{align*}
Thus,
 \[ \left\{ \diag\left(\mu,A,\mu \right)   : \mu \in \C, A \in \M_2  \right\} 
\] is a unital C*-subalgebra of $\M_4$ which is not of the form   \eqref{eq:s-full}, but is *-isomorphic to $
\B^4_{1^2,2}$.
\end{example}

The algebra of circulant matrices   provides another example of a unital C*-subalgebra of $\M_n$  that is not of the form   \eqref{eq:s-full}.
\begin{example} 
A matrix of the form 
\[
\begin{bmatrix}
a_1 & a_2 & a_3 & \cdots & a_n\\
a_n & a_1 & a_2 & \cdots & a_{n-1}\\
a_{n-1} & a_n & a_1 & \cdots & a_{n-2}\\
\vdots & \vdots & \ddots & \ddots & \vdots\\
a_2 & a_3 & \cdots & a_n & a_1
\end{bmatrix}
\]
is a {\em circulant matrix } \cite[0.9.6 and 2.2.P10]{Horn}\label{e:circulant}.  The *-algebra of $n \times n$ circulant matrices is a unital commutative  C*-subalgebra of $\M_n$ that is *-isomorphic to $\B^n_{1, 1,\ldots, 1}$. Indeed, they are simultaneously unitarily diagonalizable normal matrices.
\end{example}
  The next definition   serves as a vital intermediate step in finding equivalence constants associated to  all unital C*-subalgebras and faithful tracial states of $\bigoplus_{k=1}^N \M_{d_k}$.  
\begin{definition}\label{d:s-sub}
Consider $\A=\bigoplus_{k=1}^N \M_{d_k}$. For each $k \in [N]$, let \[p_k : \A \rightarrow \M_{d_k} \] be the canonical projection onto the $k$th summand. 
We say that  $\B\subseteq \A$ is a {\em standard unital C*-subalgebra} if it is a unital C*-subalgebra such that for each $k \in [N]$ 
\[
p_k(\B)=\B^{d_k}_{\la_k},
\]
where $\langle \la_k \vdash d_k\rangle$. 
Then $\B$ is a unital C*-subalgebra of 
\[\cC_\B=\bigoplus_{k=1}^N \B^{d_k}_{\la_k},\] which  is a unital C*-subalgebra of $ \A$.
\end{definition}

\begin{example}\label{e:d-sum}
Observe that 
\[
\B=\left\{ \diag(\mu,\nu) \oplus \mu\in \M_2 \oplus \M_1 :  \mu,\nu \in \C \right\}\cong \C^2
\]
is a standard unital C*-subalgebra of $\M_2 \oplus \M_1$  and 
\[
\cC_\B=\left\{ \diag(\mu,\nu) \oplus \eta\in \M_2 \oplus \M_1 :  \mu,\nu,\eta \in \C \right\}=\B^2_{1,1} \oplus \B^1_1\cong \C^3.
\]
We note that the unital C*-subalgebra of $\M_3$ given by
\[
\{ \diag(\mu,\nu,\mu): \mu,\nu \in \C\}
\]
is not standard, but it is *-isomorphic to $\B$. Thus, whether a subalgebra is standard or not depends upon the larger ambient algebra.
\end{example}

\begin{example}\label{e:more-s-alg}
Observe that 
\[
\B= \left\{ \diag(\mu,A,\mu) \oplus A \in \M_4 \oplus \M_2 : \mu\in\C, A \in \M_2\right\}
\]
is not a standard unital C*-subalgebra of $\M_4 \oplus \M_2$ since $p_1(\B)=\{\diag(\mu,A,\mu): \mu\in \C, A \in \M_2\}$ is not of the form   \eqref{eq:s-full}. But it is *-isomorphic to the standard unital C*-subalgebra  \[\mathcal{E}=\{ \diag(A,\mu,\mu) \oplus A \in \M_4 \oplus \M_2 : \mu \in \C, A \in \M_2\}\] of $\M_4 \oplus \M_2$. Note $p_1(\mathcal{E})=\B_{2,1^2}$ and $p_2(\mathcal{E})=\B_2=\M_2$.
\end{example}

Up to unitary equivalence, standard unital C*-subalgebras  comprise all unital C*-subalgebras of $\bigoplus_{k=1}^N \M_{d_k}$.
To be clear, let $\B, \cC \subseteq \A=\bigoplus_{k=1}^N\M_{d_k}$ be two unital C*-subalgebras. We say that $\B$ and  $\cC$ are {\em unitarily equivalent} (with respect to $\A$)  if there exists a unitary $U \in \A$ such that  $B \mapsto UBU^*$ is a bijection from $\B$ onto $\cC$, in which case  we write $\cC=U\B U^*$.   Sometimes the term {\em  spatially isomorphic} is used for unitary equivalence, but spatially isomorphic is also sometimes used in a more general sense.

Unitary equivalence  is stronger than *-isomorphism. For example, the   unital C*-subalgebras  
\[
\B_{1^2}\oplus \B_{1,1} \quad \text{ and } \quad \B_{1,1}\oplus \B_{1^2}
\]
of $\M_2\oplus \M_2$ 
are *-isomorphic but not unitarily equivalent    in $\M_2 \oplus \M_2$ (they are unitarily equivalent in $\M_4$, but we are viewing them as subalgebras of $\M_2 \oplus\M_2$).   We now state the following well-known result.

\begin{theorem}[\!{\cite[Theorem III.1.1, Corollary III.1.2, and Lemma III.2.1]{Davidson}}]\label{t:all-sub}
Every unital C*-subalgebra $\B\subseteq \M_n$ is unitarily equivalent with respect to $\M_n$ to $\B_\la$   for some refined partition $\la=(n_1^{m_1}, n_2^{m_2}, \ldots, n_L^{m_L})$ of $n$, and   
\[  \B_\la \cong \bigoplus_{i=1}^{L} \M_{n_i} .\]
Furthermore, any unital C*-subalgebra of  $ \bigoplus_{k=1}^N\M_{d_k}$ is  unitarily equivalent, with respect to $ \bigoplus_{k=1}^N\M_{d_k}$, to a standard unital C*-subalgebra.
\end{theorem}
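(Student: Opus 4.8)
The plan is to derive the first assertion from the representation theory of finite-dimensional C*-algebras and then obtain the second by applying the first summand-by-summand. For the first assertion, regard the inclusion $\B \hookrightarrow \M_n$ as a unital $*$-representation of the finite-dimensional C*-algebra $\B$ on the Hilbert space $\C^n$. The abstract structure theory (Artin--Wedderburn, using that C*-algebras are semisimple and that $\C$ is algebraically closed, so the only division algebra that can occur is $\C$ itself) yields a $*$-isomorphism $\B \cong \bigoplus_{i=1}^L \M_{n_i}$, in which the simple summands $\M_{n_i}$ correspond to the irreducible $*$-representations of $\B$, namely the standard actions of each $\M_{n_i}$ on $\C^{n_i}$. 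This already gives the displayed isomorphism $\B_\la \cong \bigoplus_{i=1}^L \M_{n_i}$, since $A \mapsto I_{m_i}\otimes A$ is a $*$-isomorphism of $\M_{n_i}$ onto $I_{m_i}\otimes \M_{n_i}$.

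The substantive step is complete reducibility of the representation $\C^n$. If $V \subseteq \C^n$ is a $\B$-invariant subspace, then because $\B$ is self-adjoint the orthogonal projection onto $V$ lies in the commutant of $\B$, so $V^\perp$ is invariant as well; iterating, $\C^n$ decomposes as an orthogonal direct sum of irreducible $\B$-submodules. Grouping these by unitary-equivalence class, and using that each irreducible submodule is equivalent to some $\C^{n_i}$, we obtain an orthogonal decomposition $\C^n \cong \bigoplus_{i=1}^L \C^{m_i}\otimes \C^{n_i}$ in which $\M_{n_i}$ acts as $I_{m_i}\otimes(\cdot)$ on the $i$th isotypic component; faithfulness of the inclusion forces every multiplicity $m_i \geq 1$. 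Choosing an orthonormal basis of $\C^n$ adapted to this decomposition produces a unitary $U \in \M_n$ with $U\B U^* = \bigoplus_{i=1}^L (I_{m_i}\otimes \M_{n_i}) = \B_\la$, where $\la = (n_1^{m_1},\ldots,n_L^{m_L})$ is a refined partition of $n = \sum_{i=1}^L m_i n_i$.

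For the second assertion, let $\B \subseteq \A = \bigoplus_{k=1}^N \M_{d_k}$ be a unital C*-subalgebra. Each canonical projection $p_k : \A \to \M_{d_k}$ is a unital $*$-homomorphism, so $p_k(\B)$ is a unital C*-subalgebra of $\M_{d_k}$. By the first assertion there is a unitary $U^{(k)} \in \M_{d_k}$ with $U^{(k)}\, p_k(\B)\, (U^{(k)})^* = \B^{d_k}_{\la_k}$ for some refined partition $\langle \la_k \vdash d_k\rangle$. Put $U = \bigoplus_{k=1}^N U^{(k)}$, a unitary in $\A$. Since conjugation by $U$ acts coordinatewise, $p_k(U\B U^*) = U^{(k)}\, p_k(\B)\, (U^{(k)})^* = \B^{d_k}_{\la_k}$ for every $k$; hence $U\B U^*$ satisfies Definition \ref{d:s-sub} and is a standard unital C*-subalgebra unitarily equivalent to $\B$, as required.

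The only genuinely nontrivial ingredient is the complete reducibility of $\B \hookrightarrow \M_n$ together with the bookkeeping that matches each isotypic multiplicity $m_i$ to the block pattern $I_{m_i}\otimes \M_{n_i}$; everything else is either the classical finite-dimensional structure theory or the routine assembly of a block-diagonal unitary. In carrying this out one must be careful to keep repeated dimension values $n_i = n_j$ carrying distinct multiplicities as \emph{separate} entries of the refined partition, which is exactly what Definition \ref{d:part} permits.
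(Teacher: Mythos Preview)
The paper does not prove this result itself; it merely states it with a citation to Davidson's textbook. Your argument is correct and follows the standard route found there: the Artin--Wedderburn decomposition together with complete reducibility of the inclusion $\B \hookrightarrow \M_n$ for the first assertion, and coordinatewise conjugation by a block-diagonal unitary for the second.
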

For example, the *-algebra of circulant matrices of Example \ref{e:circulant} is unitarily equivalent with respect to $\M_n$ to $\B^n_{1,1, \ldots, 1}$, not just *-isomorphic to it \cite[2.2.P10]{Horn}. Also, the subalgebras $\B$ and $\mathcal{E}$ of Example \ref{e:more-s-alg} are unitarily equivalent, not just *-isomorphic.

We   use Theorem \ref{t:all-sub} to generalize our results to all unital C*-subalgebras once we verify our results for the standard   subalgebras. One of the advantages of working with standard unital C*-subalgebras is that they have  canonical bases which are orthogonal with respect to the inner products induced by faithful tracial states.

 \begin{definition}
 Consider   $  \bigoplus_{k=1}^N \M_{d_k} $.  For each $k \in [N]$ and $ i,j \in [ d_k]$, let  $E^{(k)}_{i,j} \in \bigoplus_{k=1}^N \M_{d_k}$ have a $1$ in the $(i,j)$-entry of the $k$th summand and zeros in all other  entries and all other summands. We call   $E^{(k)}_{i,j}$  a {\em matrix unit}. If $N=1$, then we  suppress the superscript $(k)$.
\end{definition}   
Any standard unital C*-subalgebra  $\B \subseteq \bigoplus_{k=1}^N \M_{d_k}$   has a     standard basis (up to ordering of terms) given by matrix units or sums of distinct matrix units, which we denote by $\beta_\B$. 
\begin{example}\label{e:d-sum-basis}
For the subalgebra $\B\subseteq \M_2\oplus \M_1$ of Example \ref{e:d-sum}, we have  $\beta_\B=\{E^{(1)}_{1,1}+E^{(2)}_{1,1}, E^{(1)}_{2,2}\}$.
\end{example}
\begin{example}\label{e:s-sub-basis}
For $\B^n_{1^n}\subseteq \M_n$, we have  $\beta_{\B_{1^n}}=\left\{\sum_{k=1}^n E_{k,k}\right\}=\{I_n\}.$

For $\B^n_{1,1, \ldots, 1}\subseteq \M_n$, we have 
$
\beta_{\B^n_{1, 1,\ldots, 1}}=\{E_{1,1}, E_{2,2}, \ldots, E_{n,n}\}.
$

For $\B^4_{2^2}\subseteq \M_4$, we have   $\beta_{\B^4_{2^2}}=\{E_{i,j}+E_{i+2,j+2}:i,j \in [2]\}$.
\end{example}
All cases in the    example above can be recovered as follows.
\begin{remark}Let $\B \subseteq \M_n$ be a standard unital C*-subalgebra of Definition \ref{d:s-sub}. Thus, there exists a  refined partition $\la=(n_1^{m_1}, n_2^{m_2}, \ldots, n_L^{m_L})$ of $n$ such that  $\B=\B^n_\la$, and we have that 
\begin{align*}
 \beta_{\B^n_\la}=\bigcup_{k=1}^L \left\{\sum_{t=0}^{m_k-1} E_{i_t(p),j_t(q)} : p,q \in [ n_k],  i_t(p)-p=j_t(q)-q=tn_k+\sum_{r=1}^{k-1} m_rn_r \right\},
\end{align*}
where we regard a sum over an empty set of indices as zero.
\end{remark}

  For these bases, although some of the elements are sums of distinct matrix units, we note that the summands are from distinct blocks. For example,  for $\B^4_{2^2}$, no  element of the form $E_{i,j}+E_{i,m}$ appears in the standard basis. This easily verified fact and more are summarized in the following. 
\begin{theorem}\label{t:can-basis}
Let $\B \subseteq \bigoplus_{k=1}^N \M_{d_k}$ be a standard unital C*-subalgebra.   For each $B \in \beta_\B$, let $\Psi_B=\{(k;i,j): B^{(k)}_{i,j}=1 \}$ (i.e., $ B=\sum_{(k;i,j)\in \Psi_B} E^{(k)}_{i,j}$). 
The following hold:
\begin{enumerate}[leftmargin=*]
\item $\Psi_B \cap \Psi_{B'}=\emptyset$ for every $B,B' \in \beta_\B $ with $ B \neq B'$.
\item If $(k;i,j),(k';i',j') \in \Psi_B$, then $(k;i,j)=(k';i',j')$ if and only if $k=k'$ and ($i=i'$ or $j=j'$).
\item If $\mathbf{v}=(v_1, v_2, \ldots, v_N)\in (0,1)^N$, then $\beta_\B$ is an orthogonal basis of $\B$ with respect to $\langle \cdot, \cdot\rangle_{\tau_\mathbf{v}}$.
\end{enumerate}
\end{theorem}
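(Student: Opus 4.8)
The plan is to deduce the orthogonality statement (3) from the disjointness statement (1) together with one computation of inner products of matrix units, and to read (1) and (2) off the standard-form description of the basis $\beta_\B$. I would begin with the basic identity, for matrix units in $\bigoplus_{k=1}^N\M_{d_k}$,
\[
\langle E^{(k)}_{i,j}, E^{(k')}_{i',j'}\rangle_{\tau_\mathbf{v}}
= \tau_\mathbf{v}\big((E^{(k')}_{i',j'})^{*} E^{(k)}_{i,j}\big)
= \frac{v_k}{d_k}\,\delta_{k,k'}\,\delta_{i,i'}\,\delta_{j,j'}.
\]
This is immediate: $(E^{(k')}_{i',j'})^{*}=E^{(k')}_{j',i'}$, products in the direct sum are coordinatewise so the product vanishes unless $k=k'$, and then $E_{j',i'}E_{i,j}=\delta_{i,i'}E_{j',j}$ has trace $\delta_{j,j'}$. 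In particular the matrix units form an orthogonal family, each of positive square-norm $v_k/d_k$.

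For statements (1) and (2) I would work from the structure of $\beta_\B$. Write $\B\cong\bigoplus_l\M_{s_l}$. Each $B\in\beta_\B$ is the element of $\B\subseteq\cC_\B\subseteq\bigoplus_k\M_{d_k}$ corresponding to a single matrix-unit entry, say the $(p,q)$ entry, of a single simple summand $\M_{s_l}$, and its support $\Psi_B$ records the coordinate slots $(k;i,j)$ that this element occupies. In each summand $k$ the relevant piece lies inside $\B^{d_k}_{\la_k}=\bigoplus_i I_{m_i}\otimes\M_{n_i}$, and in the standard realization the image of the matrix unit $E_{p,q}$ is a genuine sum $\sum_t E^{(k)}_{i_t,j_t}$ of matrix units placed into diagonal sub-blocks, each term obtained from the previous by raising the row and column index by one common nonzero amount --- exactly the pattern of the Remark preceding the theorem (the case $N=1$). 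Consequently, within summand $k$ these matrix units occupy pairwise distinct rows and pairwise distinct columns; since matrix units lying in different summands already differ in their first coordinate, this is precisely statement (2).

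Statement (1) is the companion fact that distinct basis elements have disjoint supports. This holds because distinct $B,B'\in\beta_\B$ correspond to distinct matrix-unit entries of $\bigoplus_l\M_{s_l}$ (either of different simple summands, whose images sit in orthogonal corners, or of the same summand but different position $(p,q)$), and in the standard realization these images are sums of matrix units occupying disjoint coordinate slots. With (1) established, statement (3) follows by sesquilinear expansion together with the matrix-unit identity above:
\[
\langle B, B'\rangle_{\tau_\mathbf{v}}
= \sum_{(k;i,j)\in\Psi_B}\ \sum_{(k';i',j')\in\Psi_{B'}} \langle E^{(k)}_{i,j}, E^{(k')}_{i',j'}\rangle_{\tau_\mathbf{v}}
= \sum_{(k;i,j)\in\Psi_B\cap\Psi_{B'}} \frac{v_k}{d_k}.
\]
When $B\neq B'$, statement (1) gives $\Psi_B\cap\Psi_{B'}=\emptyset$, so the inner product vanishes; as $\beta_\B$ is already known to be a basis of $\B$, it is an orthogonal basis with respect to $\langle\cdot,\cdot\rangle_{\tau_\mathbf{v}}$.

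The step requiring the most care is not any single computation but the precise description of $\beta_\B$ when $N>1$, since a basis element may draw matrix units from several summands at once (as with $E^{(1)}_{1,1}+E^{(2)}_{1,1}$ in the earlier example). The cleanest way to organize the argument, and the thing I would pin down first, is to fix once and for all the standard realization of $\B\cong\bigoplus_l\M_{s_l}$ inside $\cC_\B$ and to describe $\beta_\B$ as the family of elements of $\B$ corresponding to the matrix-unit entries of the simple summands of $\B$; statements (1) and (2) then reduce, summand by summand, to the explicit $N=1$ computation recorded in the Remark.
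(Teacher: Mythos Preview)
Your proposal is correct. The paper does not actually supply a proof of this theorem: it is introduced with the phrase ``This easily verified fact and more are summarized in the following'' and no argument follows. Your write-up fills in exactly the verification the paper leaves to the reader, organized in the natural way --- read (1) and (2) off the explicit block-diagonal form of $\beta_\B$ recorded in the Remark (the $N=1$ case, extended summand by summand via $\cC_\B$), and then derive (3) from (1) via the matrix-unit inner-product identity $\langle E^{(k)}_{i,j},E^{(k')}_{i',j'}\rangle_{\tau_\mathbf{v}}=\frac{v_k}{d_k}\delta_{k,k'}\delta_{i,i'}\delta_{j,j'}$. There is nothing to compare against, and your closing remark about pinning down the standard realization of $\B\cong\bigoplus_l\M_{s_l}$ inside $\cC_\B$ is the right thing to be careful about; once that is fixed, (1) and (2) are indeed bookkeeping.
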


 We now provide an explicit way of calculating the conditional expectations associated with standard unital C*-subalgebras. This is a complete generalization of   \cite[Proposition 2.8]{Aguilar-Brooker19}.

\begin{theorem}\label{t:proj-formula}
Let $\B \subseteq \A=\bigoplus_{k=1}^N \M_{d_k}$ be a standard unital C*-subalgebra.  For each $B \in \beta_\B$ and $k \in [N]$, let $\Psi^{(k)}_B=\{(i,j): (k;i,j) \in \Psi_B\}$, and   let $|\Psi^{(k)}_B|$ denote  the cardinality of $\Psi^{(k)}_B$. Let $\mathbf{v}\in (0,1)^N$ satisfy $\sum_{k=1}^Nv_k=1$. If $A \in \A$, then 
\[
P^\mathbf{v}_\B(A)=\sum_{B \in \beta_\B} \frac{\sum_{k=1}^N \frac{v_k}{d_k} \sum_{(i,j) \in \Psi^{(k)}_B} A^{(k)}_{i,j}}{\sum_{k=1}^N\frac{|\Psi^{(k)}_B|v_k}{d_k}}B.
\]
\end{theorem}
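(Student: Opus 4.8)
The plan is to apply Theorem~\ref{t:proj} directly with $\varphi = \tau_\mathbf{v}$, the faithful tracial state of Example~\ref{e:trace}, and with $\beta = \beta_\B$, the standard basis of the standard unital C*-subalgebra $\B$. Since $\B$ is finite dimensional and, by Theorem~\ref{t:can-basis}(3), $\beta_\B$ is orthogonal with respect to $\langle \cdot, \cdot\rangle_{\tau_\mathbf{v}}$, Theorem~\ref{t:proj} applies and yields
\[
P^\mathbf{v}_\B(A) = \sum_{B \in \beta_\B} \frac{\langle A, B\rangle_{\tau_\mathbf{v}}}{\langle B, B\rangle_{\tau_\mathbf{v}}}\, B .
\]
It therefore suffices to compute the numerator $\langle A, B\rangle_{\tau_\mathbf{v}} = \tau_\mathbf{v}(B^*A)$ and the denominator $\langle B, B\rangle_{\tau_\mathbf{v}} = \tau_\mathbf{v}(B^*B)$ for each $B \in \beta_\B$ and to match them against the stated formula.

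For the numerator, I would first use the definition of $\tau_\mathbf{v}$ to write $\tau_\mathbf{v}(B^*A) = \sum_{k=1}^N \frac{v_k}{d_k}\mathrm{Tr}((B^{(k)})^* A^{(k)})$, and then expand $B^{(k)} = \sum_{(i,j) \in \Psi^{(k)}_B} E_{i,j}$, which is precisely the block-$k$ part of $B = \sum_{(k;i,j) \in \Psi_B} E^{(k)}_{i,j}$. The single elementary identity needed is $\mathrm{Tr}(E_{j,i} A^{(k)}) = A^{(k)}_{i,j}$, from which $\mathrm{Tr}((B^{(k)})^* A^{(k)}) = \sum_{(i,j) \in \Psi^{(k)}_B} A^{(k)}_{i,j}$. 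Summing over $k$ produces exactly $\sum_{k=1}^N \frac{v_k}{d_k}\sum_{(i,j)\in\Psi^{(k)}_B} A^{(k)}_{i,j}$, the numerator in the statement.

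For the denominator I would repeat the computation with $A$ replaced by $B$. Expanding $(B^{(k)})^* B^{(k)} = \sum_{(i,j),(i',j') \in \Psi^{(k)}_B} E_{j,i} E_{i',j'}$ and using $\mathrm{Tr}(E_{j,i} E_{i',j'}) = \delta_{i,i'}\delta_{j,j'}$, only the diagonal terms with $(i,j) = (i',j')$ contribute to the trace, so $\mathrm{Tr}((B^{(k)})^* B^{(k)}) = |\Psi^{(k)}_B|$ and hence $\langle B, B\rangle_{\tau_\mathbf{v}} = \sum_{k=1}^N \frac{|\Psi^{(k)}_B| v_k}{d_k}$. Substituting the two expressions into the display above gives the claim. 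This argument is essentially bookkeeping; the only point requiring care is keeping the block index $k$ and the matrix-unit indices straight so that the off-diagonal cross terms drop out of the trace, and the sole conceptual ingredient is the orthogonality from Theorem~\ref{t:can-basis}(3) that licenses the use of the expansion in Theorem~\ref{t:proj}.
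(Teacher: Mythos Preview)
Your proposal is correct and follows essentially the same approach as the paper's own proof: both apply Theorem~\ref{t:proj} with the orthogonal basis $\beta_\B$ (justified by Theorem~\ref{t:can-basis}(3)) and then compute $\tau_\mathbf{v}(B^*A)$ and $\tau_\mathbf{v}(B^*B)$ directly using the matrix-unit identity $\mathrm{Tr}(E_{i,j}^*C)=C_{i,j}$. The only cosmetic difference is that the paper invokes Theorem~\ref{t:can-basis} once more when simplifying $\tau_\mathbf{v}(B^*B)$, whereas you unpack the cross terms via $\mathrm{Tr}(E_{j,i}E_{i',j'})=\delta_{i,i'}\delta_{j,j'}$ explicitly.
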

\begin{proof}
Fix $B \in \beta_\B$.
Since $\Psi^{(k)}_B$ is the set of indices for the nonzero entries of the basis element $B$ contained in the $k$th summand,  we know that 
$$B=\sum_{k=1}^N \sum_{(i,j) \in \Psi^{(k)}_B} E^{(k)}_{i,j}.$$

If $m \in \N$, then  $\mathrm{Tr}(E_{i,j}^*C)=C_{i,j}$ for any $C \in \M_m$  and  $i,j \in [m]$. We use this fact repeatedly in the following calculation. Let $A \in \A$ and observe that 
$$\tau_\mathbf{v}(B^*A)=\tau_\mathbf{v}\left(\left(\sum_{k=1}^N \sum_{(i,j) \in \Psi^{(k)}_B} E^{(k)}_{i,j}\right)^*A\right)=\sum_{k=1}^N \frac{v_{k}}{d_k} \sum_{(i,j)\in \Psi^{(k)}_B}A^{\left(k\right)}_{i,j}.$$

  We also have by Theorem \ref{t:can-basis}
\begin{align*}
\tau_\mathbf{v}(B^*B)&=\tau_\mathbf{v}\left(\left(\sum_{k=1}^N \sum_{(i,j) \in \Psi^{(k)}_B} E^{(k)}_{i,j}\right)^*\left(\sum_{k=1}^N \sum_{(i,j) \in \Psi^{(k)}_B} E^{ (k) }_{i,j}\right)\right)\\
&=\sum_{k=1}^N \frac{v_{k}}{d_k} \sum_{(i,j)\in \Psi^{(k)}_B}1= \sum_{k=1}^N \frac{|\Psi^{(k)}_B| v_{k}}{d_k}.
\end{align*}
Hence, by (3) of Theorem \ref{t:can-basis} and Theorem \ref{t:proj}, we conclude that 
$$P^\mathbf{v}_{\B}(A)=\sum_{B \in \beta_\B}  \frac{\sum_{k=1}^N \frac{v_{k}}{d_k} \sum_{(i,j)\in \Psi^{(k)}_B}A^{\left(k\right)}_{i,j}}{\sum_{k=1}^N \frac{|\Psi^{(k)}_B| v_{k}}{d_k}}B,$$
which completes the proof. \end{proof}
  
We next show how the Frobenius--Rieffel norms recover the Frobenius norm.
\begin{theorem}\label{A_1^n}

For all $A \in \M_n $,   \[\|A\|_{\B_{1^n}}=\|A\|_{F_n},\] where $\|A\|_{F_n}=\frac{1}{\sqrt{n}} \sqrt{\mathrm{Tr}(A^*A)}$ is the Frobenius norm normalized with respect to $I_n$.
\end{theorem}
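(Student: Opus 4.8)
The plan is to compute the conditional expectation $P_{\B_{1^n}}$ explicitly and then unwind the definition of the Frobenius--Rieffel norm. First I would recall from Example \ref{e:frob-sub} that $\B_{1^n}^n = I_n \otimes \M_1$ consists precisely of the scalar multiples $\{\mu I_n : \mu \in \C\}$, and from Example \ref{e:s-sub-basis} that its standard basis is the singleton $\beta_{\B_{1^n}} = \{I_n\}$. Because the basis has a single element, Theorem \ref{t:proj} collapses to the one-term projection formula
\[
P_{\B_{1^n}}(A) = \frac{\langle A, I_n\rangle_\tau}{\langle I_n, I_n\rangle_\tau}\, I_n,
\]
where $\tau = \frac{1}{n}\mathrm{Tr}$ is the unique faithful tracial state on $\M_n$ (the $N=1$ case of Example \ref{e:trace}).

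Next I would evaluate the two inner products. By definition $\langle A, I_n\rangle_\tau = \tau(I_n^* A) = \tau(A) = \frac{1}{n}\mathrm{Tr}(A)$ and $\langle I_n, I_n\rangle_\tau = \tau(I_n) = \frac{1}{n}\mathrm{Tr}(I_n) = 1$. Hence $P_{\B_{1^n}}(A) = \tau(A)\, I_n$, and applying this to $A^*A$ in place of $A$ gives
\[
P_{\B_{1^n}}(A^*A) = \tau(A^*A)\, I_n = \frac{1}{n}\,\mathrm{Tr}(A^*A)\, I_n.
\]

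Finally I would feed this into the definition $\|A\|_{\B_{1^n}} = \sqrt{\|P_{\B_{1^n}}(A^*A)\|_{\mathrm{op}}}$ from Theorem \ref{td:gen-Frob-norm}. The key observation here—the only point requiring a moment of care—is that $\frac{1}{n}\mathrm{Tr}(A^*A)$ is a \emph{nonnegative real scalar} (since $A^*A$ is positive semidefinite, its trace is nonnegative), so the operator norm of the scalar matrix $\frac{1}{n}\mathrm{Tr}(A^*A)\,I_n$ is exactly that scalar, using $\|I_n\|_{\mathrm{op}} = 1$. Therefore
\[
\|A\|_{\B_{1^n}} = \sqrt{\tfrac{1}{n}\,\mathrm{Tr}(A^*A)} = \frac{1}{\sqrt{n}}\sqrt{\mathrm{Tr}(A^*A)} = \|A\|_{F_n},
\]
which is the claim. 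There is no substantive obstacle in this argument; it is essentially a matter of assembling the projection formula, the triviality of the target algebra, and the positivity of $A^*A$. (One could alternatively route the computation through Theorem \ref{t:proj-formula} with $N=1$ and $\Psi_{I_n} = \{(1;i,i): i \in [n]\}$, but the direct single-basis-element calculation via Theorem \ref{t:proj} is cleaner.)
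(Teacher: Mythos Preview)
Your proof is correct and follows essentially the same approach as the paper: compute $P_{\B_{1^n}}(A)=\frac{1}{n}\mathrm{Tr}(A)\,I_n$ and then unwind the definition of the Frobenius--Rieffel norm. The only cosmetic difference is that the paper cites Theorem~\ref{t:proj-formula} for the conditional expectation formula whereas you derive it directly from the one-term projection in Theorem~\ref{t:proj}, which you yourself note as an alternative.
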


\begin{proof}
By Theorem  \ref{t:proj-formula}, we have that $$ P_{\mathcal{B}_{1^n} }(A)= \frac{1}{n}\mathrm{Tr}(A)I_n.$$
Therefore, \[
\|A\|_{\B_{1^n} }^2 = \left\|P_{\mathcal{B}_{1^n} }(A^*A)\right\|_{\op} =\left\| \frac{1}{n}\mathrm{Tr}(A^*A)I_n\right\|_{\op}  =\|A\|_{F_n}^2.   \qedhere
\]
\end{proof}

The next two examples show that Frobenius--Rieffel norms   are not generally sub-multiplicative or unitarily invariant.

\begin{example}
Consider the unital C*-subalgebra $\B_{1,1}^2$ of $\M_2$. Let $A= \begin{bmatrix}
1&2\\
2&1
\end{bmatrix}$ and use Theorem \ref{t:proj-formula} to calculate
\begin{align*}
\|A\|_{\B_{1,1}^2}^2=\left\|P_{\B_{1,1}^2}(A^*A)\right\|_{\op} =\left\|P_{\B_{1,1}^2}\left(\begin{bmatrix}
5 & 4 \\
4 & 5 \\
\end{bmatrix}\right)\right\|_{\op} =\left\|\begin{bmatrix}
5 & 0 \\
0 & 5 \\
\end{bmatrix}\right\|_{\op} =5.
\end{align*}
Thus,   $\|A\|_{\B_{1,1}^2} \cdot \|A\|_{\B_{1,1}^2}=5.$ Similarly,   $\|AA\|_{\B_{1,1}^2}=41$, so
\[\|AA\|_{\B_{1,1}^2}>\|A\|_{\B_{1,1}^2}\|A\|_{\B_{1,1}^2}.\] 
\end{example}
\begin{example}
Consider $$A=\begin{bmatrix}
1& 1\\
1 & 1
\end{bmatrix}$$ and the unitary 
$$U=\frac{1}{\sqrt{2}}\begin{bmatrix}
1 & 1\\
1 & -1
\end{bmatrix}.$$
Following similar calculations as the last example, we conclude \[
\|A\|_{\B_{1,1}^2}^2= 2 \neq 4=\left\|U^*AU\right\|_{\B_{1,1}^2}^2.\]
\end{example}

 \section{Equivalence constants for the operator norm}
 
 As discussed in the introduction, it is important to be able to compare the Frobenius--Rieffel norms  with the operator norm.    Theorem  \ref{td:gen-Frob-norm}  says that  
 \[
 \|A\|_{P,\B} =\sqrt{\|P(A^*A)\|_\mathrm{op}}\leq \sqrt{\|A^*A\|_\mathrm{op}}=\sqrt{\|A\|^2_\mathrm{op}}=\|A\|_\mathrm{op}
 \]
for all $A \in \bigoplus_{k=1}^N \M_{d_k}$,  any unital C*-subalgebra $\B \subseteq \A$, and any conditional expectation $P: \A \rightarrow \B$ onto $\B$. This equality is achieved by the identity matrix. Thus, the nontrivial task  is to find a constant $\kappa^+_{P, \B}>0$ such that 
\[
\kappa^+_{P, \B}    \|A\|_\mathrm{op} \leq \|A\|_{P,\B}
\]
for all $A \in \bigoplus_{k=1}^N \M_{d_k}.$

We begin with some general results  and then focus on the case of $\M_n$. Then, we move to the general   case, which is more involved since the Frobenius--Rieffel norms   depend on the underlying    subalgebra and   faithful tracial state.  
We begin with an inequality that allows us to avoid dealing with $A^*A$.

\begin{lemma}\label{l:compare}
Let $\mathcal{B} \subseteq \A=\bigoplus_{k=1}^N \M_{d_k}$ be a unital C*-subalgebra, let $\tau$ be a faithful tracial state on $\A$, and let $\mu \in (0,\infty).$ The following are equivalent.
\begin{enumerate}[leftmargin=*]
\item    $\|C\|_\mathrm{op} \leq \mu \|P^\tau_\A(C)\|_\mathrm{op}$ for all positive  $C \in \A$.
\item  $\|A\|_\mathrm{op}\leq \sqrt{\mu} \|A\|_{\tau, \A}$ for all $A \in \A$.
\end{enumerate}
\end{lemma}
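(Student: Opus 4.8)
The plan is to prove the two implications separately, leaning on two elementary facts: the C*-identity $\|A^*A\|_\op=\|A\|_\op^2$, and the fact that the positive cone of $\A$ is exactly $\{A^*A : A\in\A\}$. The bridge between the two statements is the defining identity $\|A\|_{\tau,\B}^2=\|P^\tau_\B(A^*A)\|_\op$ from Theorem \ref{td:gen-Frob-norm}, which shows that after squaring, each condition is an inequality comparing $\|C\|_\op$ with $\|P^\tau_\B(C)\|_\op$ on a positive element $C$; this reformulation is precisely what lets one avoid manipulating products $A^*A$. For $(1)\Rightarrow(2)$, I would fix $A\in\A$ and apply the hypothesis to the positive element $C=A^*A$, obtaining $\|A^*A\|_\op\leq\mu\,\|P^\tau_\B(A^*A)\|_\op$. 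The C*-identity rewrites the left-hand side as $\|A\|_\op^2$ and the defining identity rewrites the right-hand side as $\mu\,\|A\|_{\tau,\B}^2$, so taking positive square roots gives $(2)$.

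For the converse $(2)\Rightarrow(1)$, let $C\in\A$ be positive and form its positive square root $C^{1/2}\in\A$ via the continuous functional calculus; since $C^{1/2}$ is self-adjoint, $(C^{1/2})^*C^{1/2}=C$. Applying $(2)$ to $A=C^{1/2}$ and squaring gives $\|C^{1/2}\|_\op^2\leq\mu\,\|C^{1/2}\|_{\tau,\B}^2=\mu\,\|P^\tau_\B(C)\|_\op$, the last step being the defining identity combined with $(C^{1/2})^*C^{1/2}=C$. The C*-identity then gives $\|C^{1/2}\|_\op^2=\|(C^{1/2})^*C^{1/2}\|_\op=\|C\|_\op$, so the inequality becomes $\|C\|_\op\leq\mu\,\|P^\tau_\B(C)\|_\op$, which is $(1)$.

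I expect no genuine obstacle here; the only step requiring a moment's care is the backward direction, where for each positive $C$ one must exhibit an element playing the role of $A$ with $A^*A=C$. This is handled by the square root from functional calculus, after which the proof is just the C*-identity invoked twice — as $\|A\|_\op^2=\|A^*A\|_\op$ for a general $A$ and as $\|C\|_\op=\|C^{1/2}\|_\op^2$ for the square root — together with the bookkeeping of squaring and taking square roots.
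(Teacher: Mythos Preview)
Your proof is correct and follows essentially the same approach as the paper's: both directions hinge on the C*-identity and the definition $\|A\|_{\tau,\B}^2=\|P^\tau_\B(A^*A)\|_\op$. The only difference is that for $(2)\Rightarrow(1)$ you explicitly invoke the positive square root $C^{1/2}$ to exhibit an $A$ with $A^*A=C$, whereas the paper leaves this step implicit (simply passing from ``$\|A^*A\|_\op\leq\mu\,\|P^\tau_\B(A^*A)\|_\op$ for all $A$'' to the claim for all positive $C$); your version is a bit more careful on precisely the point you flagged as requiring care.
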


\begin{proof}
We begin with $(1)\implies(2)$. Suppose  $\|C\|_{\op} \leq \mu \|P^\tau_\A (C) \|_{\op}$ for all  positive   $C \in \A$. Then   $\|A^*A\|_{\op} \leq \mu \|P^\tau_\A (A^*A) \|_{\op}$ for all $A \in \A$. Since  $\|A^*A\|_{\op}=\|A\|_\op^2$, we see that  $\|A\|_{\op} \leq \sqrt{\mu} \|A\|_{\tau,\A}.$

For $(2)\implies(1)$, suppose that \[\|A\|_{\op} \leq \sqrt{\mu} \|A\|_{\tau,\A}=\sqrt{\mu} \sqrt{\|P^\tau_\A (A^*A)\|_{\op}}\] for all $A \in  \A$. Then $\|A^*A\|_{\op} =\|A\|_\op^2 \leq \mu \|P^\tau_\A (A^*A) \|_{\op}$. Thus, $\|C\|_{\op} \leq \mu \|P^\tau_\A (C) \|_{\op}$ for all  positive   $C \in  \A$.
\end{proof}

The next  lemma  allows us to extend our results from standard unital C*-subalgebras to all unital C*-subalgebras. The following fact is surprising since, at the end of the last section, we showed that the Frobenius--Rieffel norms are not unitarily invariant in general. Also,   it can be the case that $\|A\|_{\tau, \B} \neq \|A\|_{\tau,\cC}$ for certain $A \in\A$, but the equivalence constants are the same for uniatrily equivalent subalgebras $\B,\cC\subseteq \A$. 

\begin{lemma}\label{l:u-equiv-const}
Let $\tau$ be a faithful tracial state on $\A=\bigoplus_{k=1}^N \M_{d_k}$, let $\B, \cC \subseteq \A$ be unitarily equivalent   (with respect to $\A$) unital C*-subalgebras , and let $\mu \in (0,\infty).$ The following are equivalent.
\begin{enumerate}[leftmargin=*]
\item $\mu \|A\|_{\op} \leq \|A\|_{\tau, \B}$ for all $A \in \A$.
\item   $\mu \|A\|_{\op} \leq \|A\|_{\tau, \cC}$ for all $A \in \A$.
\end{enumerate}
\end{lemma}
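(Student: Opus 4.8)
The plan is to exploit that $\cC = U\B U^*$ for some unitary $U \in \A$ and to reduce both inequalities to a single transformation formula relating the conditional expectations $P^\tau_\B$ and $P^\tau_\cC$. The central claim I would establish first is that
$$P^\tau_\cC(A) = U\,P^\tau_\B(U^*AU)\,U^* \qquad \text{for all } A \in \A.$$
To prove this, I would verify that the right-hand side defines a $\tau$-preserving conditional expectation onto $\cC$ and then invoke the uniqueness clause of Theorem \ref{t:proj}, which applies because $\B$ and $\cC$ are finite dimensional. Concretely: the map $A \mapsto U P^\tau_\B(U^*AU)U^*$ is linear; it lands in $U\B U^* = \cC$; it is contractive because conjugation by $U$ preserves $\|\cdot\|_{\op}$ and $P^\tau_\B$ is contractive; it fixes $\cC$ pointwise, since $C \in \cC$ gives $U^*CU \in \B$ and hence $U P^\tau_\B(U^*CU)U^* = U(U^*CU)U^* = C$; and it is $\tau$-preserving because $\tau(UXU^*) = \tau(X)$ together with the $\tau$-preservation of $P^\tau_\B$.

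The step I expect to be the main (indeed essentially the only) obstacle is the verification that the conjugated map is $\tau$-preserving, which is where traciality is indispensable: the identity $\tau(UXU^*) = \tau(X)$ holds precisely because $\tau$ is a trace, and this is exactly what licenses the use of uniqueness. Everything else in the argument is formal bookkeeping.

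With the formula in hand, I would compute, for any $A \in \A$,
$$\|A\|_{\tau,\cC}^2 = \|P^\tau_\cC(A^*A)\|_{\op} = \|P^\tau_\B(U^*A^*AU)\|_{\op} = \|P^\tau_\B((AU)^*(AU))\|_{\op} = \|AU\|_{\tau,\B}^2,$$
using that conjugation by $U$ is an operator-norm isometry. This gives the clean identity $\|A\|_{\tau,\cC} = \|AU\|_{\tau,\B}$. To deduce $(1) \Rightarrow (2)$, for arbitrary $A$ I would apply $(1)$ to $AU$ and use that right multiplication by a unitary preserves the operator norm, so that $\mu\|A\|_{\op} = \mu\|AU\|_{\op} \leq \|AU\|_{\tau,\B} = \|A\|_{\tau,\cC}$. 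The reverse implication $(2) \Rightarrow (1)$ follows by the symmetric argument obtained by replacing $U$ with $U^*$ (equivalently, noting $\B = U^*\cC U$), which yields $\|A\|_{\tau,\B} = \|AU^*\|_{\tau,\cC}$ and hence the desired bound. This symmetry is what makes the equivalence of the two constants so transparent despite the Frobenius--Rieffel norms themselves not being unitarily invariant.
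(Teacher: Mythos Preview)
Your proof is correct and follows the same overall strategy as the paper: establish the conjugation identity $P^\tau_\cC(A) = U P^\tau_\B(U^*AU)U^*$ and then compute $\|A\|_{\tau,\cC} = \|AU\|_{\tau,\B}$ to transfer the bound. The only difference is tactical: you derive the identity by checking that $A \mapsto U P^\tau_\B(U^*AU)U^*$ is a $\tau$-preserving conditional expectation onto $\cC$ and invoking the uniqueness clause of Theorem~\ref{t:proj}, whereas the paper obtains it directly from the orthogonal-basis formula in that theorem by first showing that $\{UB_iU^*\}$ is a $\tau$-orthogonal basis for $\cC$. Your route is slightly more conceptual (no basis needs to be chosen) and makes clearer which hypotheses are doing the work; the paper's route is more hands-on but equally short. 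Either way the subsequent norm computation and the symmetric argument for $(2)\Rightarrow(1)$ are identical.
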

\begin{proof}
The argument is symmetric, so we prove only $(1)\implies(2)$. Fix an orthogonal basis $\beta=\{B_1, B_2, \ldots, B_m\}$ for $\B$ with respect to $\tau$. Since $U(\cdot)U^*:\B \rightarrow \cC$ is a linear bijection,     $\beta'=\{UB_1U^*,  UB_2U^*,\ldots, UB_mU^*\}$ is a basis for $\cC$. Furthermore, if $j,k \in [m]$, we have
\begin{align*}
\tau((UB_jU^*)^*UB_kU^*)=\tau(UB_j^*B_kU^*)
=\tau(U^*UB_j^*B_k)=\tau(B_j^*B_k).
\end{align*}
Hence, $\beta'$ is an orthogonal basis for $\cC$ with respect to $\tau$. 

Now  let $A \in \A$.  Theorem \ref{t:proj} implies that
\begin{align*}
P^\tau_\cC(A)&=\sum_{i=1}^m \frac{\tau((UB_iU^*)^*A)}{\tau((UB_iU^*)^*UB_iU^*)}UB_iU^*\\
& = U\left(\sum_{i=1}^m \frac{\tau(UB_i^*U^*A)}{\tau(B_i^*B_i)}B_i\right)U^*\\
& = U\left(\sum_{i=1}^m \frac{\tau(U^*AUB_i^*)}{\tau(B_i^*B_i)}B_i\right)U^* =UP^\tau_\B(U^*AU)U^*.
\end{align*}
For all $A \in  \A$, 
\begin{align*}
\|A\|_{\tau,\cC}^2&=\|P^\tau_\cC(A^*A)\|_{\op}= \|UP^\tau_\B(U^*A^*AU)U^*\|_{\op} = \|P^\tau_\B(U^*A^*AU)\|_{\op}\\
& =\|P^\tau_\B((AU)^*AU)\|_{\op}=\|AU\|^2_{\tau,\B} \geq \mu^2\|AU\|_\op^2=\mu^2 \|A\|^2_\op,
\end{align*}
which completes the proof.
\end{proof}
 We next present a basic lemma about positive  matrices.

\begin{lemma}
If $T=A-B$ for some positive   $A, B \in \M_n$, then $\|T\|_\op  \leq \max\{ \|A\|_\op, \|B\|_\op\}$.
\end{lemma}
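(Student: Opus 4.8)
The plan is to exploit the fact that $T$ is self-adjoint together with the variational description of the operator norm via quadratic forms. Since $A$ and $B$ are positive, they are in particular self-adjoint, hence so is $T = A - B$. For a self-adjoint matrix the operator norm coincides with the numerical radius, so
\[
\|T\|_\op = \max_{\|x\|_2 = 1} |\langle Tx, x\rangle|,
\]
where $\langle \cdot, \cdot\rangle$ denotes the standard inner product on $\C^n$ inducing the $2$-norm. First I would fix a unit vector $x$ attaining this maximum.

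Next, I would split the quadratic form along $T = A - B$, writing $\langle Tx, x\rangle = \langle Ax, x\rangle - \langle Bx, x\rangle$. Positivity of $A$ and $B$ gives $\langle Ax, x\rangle \geq 0$ and $\langle Bx, x\rangle \geq 0$; moreover, the same quadratic-form characterization applied to the positive matrices $A$ and $B$ yields $\langle Ax, x\rangle \leq \|A\|_\op$ and $\langle Bx, x\rangle \leq \|B\|_\op$. Hence
\[
\langle Tx, x\rangle \leq \langle Ax, x\rangle \leq \|A\|_\op \quad\text{and}\quad \langle Tx, x\rangle \geq -\langle Bx, x\rangle \geq -\|B\|_\op,
\]
so that $|\langle Tx, x\rangle| \leq \max\{\|A\|_\op, \|B\|_\op\}$. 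Combining this with the displayed identity for $\|T\|_\op$ completes the argument.

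There is no serious obstacle here: the statement is essentially a one-line consequence of the spectral theorem once the right tool is in hand. The only point needing care is the justification of the quadratic-form formulas---that for a self-adjoint matrix $C$ one has $\|C\|_\op = \max_{\|x\|_2=1}|\langle Cx, x\rangle|$, and that for a positive matrix this maximum is attained without the absolute value and equals the largest eigenvalue---both of which are standard facts from the spectral theory of Hermitian matrices. An equivalent route is to diagonalize $T$ directly and take $x$ to be an eigenvector for the eigenvalue of largest modulus; the same estimates on $\langle Ax, x\rangle$ and $\langle Bx, x\rangle$ then finish the proof.
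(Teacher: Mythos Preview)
Your proof is correct and is essentially the same argument as the paper's: both establish that the spectrum of the self-adjoint matrix $T$ lies in $[-\|B\|_\op,\|A\|_\op]$ using positivity. The paper phrases this via the operator inequalities $-\|B\|_\op I \leq -B \leq T \leq A \leq \|A\|_\op I$, while you phrase it via the numerical radius / quadratic-form characterization, which is the same content in different language.
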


\begin{proof}
Since $-\|B\|_{\op}I \leq -B \leq T \leq A  \leq \|A\|_\op I $, it follows that $T - \lambda I_n$ is invertible if $\lambda> \|A\|_{\op}$ or $\lambda < -\|B\|_{\op}$. Thus, the spectrum of the self-adjoint matrix $T$ is contained in the interval $[-\|B\|_{\op},\|A\|_{\op}]$.
\end{proof}

Lemma \ref{l:conj-const} is our main tool in providing equivalence constants. It is motivated by the notion of ``pinching'' in matrix analysis (see \cite{Bhatia00}).
\begin{lemma}\label{l:conj-const}
Let $X \in   \M_n$ be positive. If $P(X)$ is a mean of $n$  unitary conjugates of $X$, $X^\mathsf{T}$ (the transpose of $X$), or $X^*$, one of which is $X$ itself, then $$\|P(X)\|_{\op} \geq \frac{1}{n}\|X\|_{\op}.$$
\end{lemma}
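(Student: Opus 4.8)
The plan is to avoid estimating the operator norm directly and instead work in the Löwner (positive semidefinite) order, where the statement becomes almost immediate. First I would record that every summand appearing in the mean is positive. Since $X$ is positive, so is $X^* = X$; and $X^\mathsf{T} = \overline{X}$ is positive as well, because for a Hermitian positive matrix one has $\langle \overline{X}v, v\rangle = \langle X\overline{v}, \overline{v}\rangle \geq 0$ for every vector $v$. As unitary conjugation preserves positivity, each term $U_i Y_i U_i^*$ in the average is positive.

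Second, writing $P(X) = \frac{1}{n}\sum_{i=1}^n U_i Y_i U_i^*$ and using the hypothesis that one summand equals $X$, say at index $i_0$, I would isolate that term to obtain $P(X) - \frac{1}{n}X = \frac{1}{n}\sum_{i \neq i_0} U_i Y_i U_i^*$. The right-hand side is a sum of positive matrices, hence positive, so $P(X) \geq \frac{1}{n}X$ in the Löwner order. Third, I would invoke the monotonicity of the operator norm on positive matrices: if $A \geq B \geq 0$, then $\|A\|_\op \geq \|B\|_\op$, since for positive matrices the operator norm is the largest eigenvalue and $\lambda_{\max}(A) = \max_{\|v\|=1}\langle Av, v\rangle \geq \max_{\|v\|=1}\langle Bv, v\rangle = \lambda_{\max}(B)$. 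Applying this with $A = P(X)$ and $B = \frac{1}{n}X$ gives $\|P(X)\|_\op \geq \frac{1}{n}\|X\|_\op$, as desired.

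I do not expect a serious obstacle here; the one genuine idea is passing to the positive semidefinite order, which reduces the claim to elementary monotonicity of the top eigenvalue. The only point needing care is the positivity of $X^\mathsf{T}$, but this is used only to guarantee the remaining summands are positive, and the argument goes through as long as each term of the mean is positive and at least one equals $X$.
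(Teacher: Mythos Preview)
Your argument is correct and in fact cleaner than the paper's. The paper does not pass to the L\"owner order; instead it estimates $\|X-P(X)\|_\op$ via an auxiliary lemma (if $T=A-B$ with $A,B\geq 0$ then $\|T\|_\op\leq\max\{\|A\|_\op,\|B\|_\op\}$), obtaining $\|X-P(X)\|_\op\leq\frac{n-1}{n}\|X\|_\op$, and then concludes by the reverse triangle inequality $\|P(X)\|_\op\geq\|X\|_\op-\|X-P(X)\|_\op$. Your route bypasses both that lemma and the triangle-inequality step: once one observes $P(X)\geq\frac{1}{n}X\geq 0$, monotonicity of $\lambda_{\max}$ on positive matrices finishes immediately. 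The paper's approach has the mild advantage of being phrased purely in norms, but your order-theoretic argument is shorter, requires no preparatory lemma, and makes transparent why the hypothesis ``one term equals $X$'' is exactly what is needed.
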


\begin{proof}
Since $X$ is positive, a unitary conjugate of $X$, $X^\mathsf{T}$, or $X^*$ is also positive  (and has the same operator norm as $X$). Suppose that $$P(X)=\frac{1}{n}\sum_{i=0}^{n-1} C_i$$ is a mean of $n$ unitary conjugates $C_i$ of $X$, $X^\mathsf{T}$, or $X^*$ and that $C_0=X$ itself. Since $P(X)$ is   positive, the previous lemma ensures that
$$\|X-P(X)\|_\op = \left\| \frac{n-1}{n} X- \frac{1}{n} \sum_{i=1}^{n-1} C_i \right\|_\op \leq \frac{n-1}{n} \|X\|_\op.$$
Consequently,
\begin{align*}
\|P(X)\|_\op&= \|X+P(X)-X\|_\op\\
&\geq \|X\|_\op -\|X-P(X)\|_\op \geq \|X\|_\op -\frac{n-1}{n} \|X\|_\op=\frac{1}{n}\|X\|_\op,
\end{align*}
which completes the proof. \end{proof}
We first apply this lemma to the following family of unital C*-subalgebras.
\begin{theorem} \label{t:no-rep-blocks}
Let  $\B_\la \subseteq \M_n$ where $\langle \la \vdash n\rangle$ and $\la=(n_1, n_2, \ldots,n_L)$.  

If $X \in \M_n$ is positive, then 
\[
\frac{1}{L }\|X\|_{\op} \leq \|P_{\B_\la}(X)\|_{\op}.
\]

Moreover,   $$\frac{1}{\sqrt{L }}\|X\|_{\op} \leq \|X\|_{\B_\la}$$
for all $X \in \M_n$.
\end{theorem}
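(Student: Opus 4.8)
The plan is to recognize $P_{\B_\la}$ as the block-diagonal \emph{pinching} and to realize it as a mean of $L$ unitary conjugates, so that Lemma \ref{l:conj-const} applies almost verbatim. The hypothesis matters here: since $\la=(n_1,n_2,\ldots,n_L)$ carries all multiplicities equal to one, Definition \ref{n:s-full} gives $\B_\la=\bigoplus_{i=1}^{L}(I_1\otimes\M_{n_i})=\bigoplus_{i=1}^L\M_{n_i}$ sitting inside $\M_n$ as the block-diagonal matrices with diagonal blocks of sizes $n_1,\ldots,n_L$ (there is no tensor factor forcing distinct blocks to coincide). Using the matrix-unit basis $\beta_{\B_\la}$ together with Theorem \ref{t:proj-formula} (or a one-line computation of $\langle A,E_{i,j}\rangle_\tau$), I would first record that $P_{\B_\la}(X)$ is obtained from $X$ by retaining the $L$ diagonal blocks and zeroing out every off-diagonal block.

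Next I would produce the unitary-conjugate representation. Let $\omega=e^{2\pi i/L}$ and, for $k=0,1,\ldots,L-1$, set $U_k=\mathrm{diag}\!\left(\omega^{0\cdot k}I_{n_1},\omega^{1\cdot k}I_{n_2},\ldots,\omega^{(L-1)k}I_{n_L}\right)$, a diagonal unitary in $\M_n$. Writing $X$ in block form $X=(X_{pq})_{p,q=1}^L$, the $(p,q)$ block of $U_kXU_k^*$ is $\omega^{(p-q)k}X_{pq}$, so averaging over $k$ and using $\frac{1}{L}\sum_{k=0}^{L-1}\omega^{(p-q)k}=\delta_{pq}$ gives $\frac{1}{L}\sum_{k=0}^{L-1}U_kXU_k^*=P_{\B_\la}(X)$. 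Since $U_0=I_n$, one of these $L$ conjugates is $X$ itself.

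With this identity in hand the first inequality is immediate: for positive $X$, Lemma \ref{l:conj-const} applied to the mean of the $L$ unitary conjugates $U_kXU_k^*$ (one of which is $X$) yields $\|P_{\B_\la}(X)\|_{\op}\geq \frac{1}{L}\|X\|_{\op}$. For the ``moreover'' statement I would invoke Lemma \ref{l:compare} with $\mu=L$: the inequality just established is precisely condition $(1)$ of that lemma for $\B=\B_\la$, so condition $(2)$ gives $\|A\|_{\op}\leq\sqrt{L}\,\|A\|_{\B_\la}$ for all $A\in\M_n$. Equivalently, one sets $X=A^*A$ and computes directly $\|A\|_{\op}^2=\|A^*A\|_{\op}=\|X\|_{\op}\leq L\,\|P_{\B_\la}(A^*A)\|_{\op}=L\,\|A\|_{\B_\la}^2$, which rearranges to $\frac{1}{\sqrt{L}}\|A\|_{\op}\leq\|A\|_{\B_\la}$.

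The only genuine content is the pinching-as-average identity; getting the root-of-unity unitaries $U_k$ right is the crux, and the one point requiring care is that Lemma \ref{l:conj-const} must be applied with a mean of $L$ conjugates (not $n$), which is exactly what produces the constant $\frac{1}{L}$ rather than $\frac{1}{n}$; its proof is insensitive to the number of terms, so this causes no difficulty. Everything else is bookkeeping with the block decomposition.
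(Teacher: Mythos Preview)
Your proposal is correct and follows essentially the same approach as the paper: both realize $P_{\B_\la}$ as the average $\frac{1}{L}\sum_{k=0}^{L-1}U^kX(U^*)^k$ for the diagonal unitary $U=\bigoplus_{i=1}^L\omega^iI_{n_i}$ (your $U_k$ differ from the paper's $U^k$ only by a scalar phase), then invoke Lemma~\ref{l:conj-const} and Lemma~\ref{l:compare}. Your remark that Lemma~\ref{l:conj-const} applies with $L$ terms rather than the literal ``$n$'' in its statement is a helpful clarification the paper leaves implicit.
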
 

\begin{proof}
 Consider the unitary $U=\bigoplus_{i=1}^{L} \omega^{i}I_{n_{i}}$, where $\omega$ is a primitive $L$th root of unity.  Let $X \in \M_n$. We may write $X$ as blocks in the following way
 \[
 X=\begin{bmatrix}
 X_{n_1} & & & A\\
  & X_{n_2} & & \\
  & & \ddots & \\
  B & & & X_{n_L}
 \end{bmatrix},
 \]
 where $X_{n_k} \in \M_{n_k}$ with $(X_{n_k})_{i,j}=X_{i+n_1+\cdots +n_{k-1},\ j+n_1+\cdots+ n_{k-1}}$ for each $k \in \{1,2,\ldots, L\},$ and $ i,j \in \{1,2,\ldots, n_k\}$, and $A$ and $B$ denote the remaining entries of $X$.  
By Theorem \ref{t:proj-formula}, it follows that 
\[P_{\B_\la}(X)= \begin{bmatrix}
 X_{n_1} & & & 0\\
  & X_{n_2} & & \\
  & & \ddots & \\
  0 & & & X_{n_L}
 \end{bmatrix}.
\]
On the other hand, a direct computation shows that 
\[\frac{1}{L}\sum_{i=0}^{L-1}U^{i}XU^{*i}= \begin{bmatrix}
 X_{n_1} & & & 0\\
  & X_{n_2} & & \\
  & & \ddots & \\
  0 & & & X_{n_L}
 \end{bmatrix}.\] 
 Hence,  $P_{\B_\la}(X)=\frac{1}{L}\sum_{i=0}^{L-1}U^{i}XU^{*i}$.  By Lemma \ref{l:conj-const}, we have that $\|P_{\B_\la}(X)\|_{\op} \geq (1/L)\|X\|_{\op}$. 

By Lemma \ref{l:compare}, we have  $$\frac{1}{\sqrt{L }}\|X\|_{\op} \leq \|X\|_{\B_\la}$$ for all $X \in \M_n$. 
\end{proof}

We can now use the ideas from Theorem \ref{t:no-rep-blocks} to calculate equivalence constants for a  subalgebra of the form $\B_\la$ for arbitrary $\la$ (Definition \ref{n:s-full}). The idea of the proof is as follows. Assume we want to project a matrix of the form
\[ X=\begin{bmatrix}
A_{1,1} & A_{1,2} & A_{1,3} \\
A_{2,1} & A_{2,2} & A_{2,3}\\
A_{3,1}& A_{3,2}& A_{3,3}
\end{bmatrix}\]
onto the subalgebra of matrices of the form \[
\begin{bmatrix}
B & 0 & 0 \\
0 & B & 0\\
0 & 0 & C
\end{bmatrix}.\]
We can do this in two steps. First project $X$ onto 
\[
Y=\begin{bmatrix}
A_{1,1} & 0 & 0 \\
0 & A_{2,2} & 0\\
0& 0& A_{3,3}
\end{bmatrix},
\]
 which is the setting of Theorem \ref{t:no-rep-blocks}.  Then  project $Y$ onto 
 \[
 \begin{bmatrix}
M & 0 & 0 \\
0 & M & 0\\
0& 0& A_{3,3}
\end{bmatrix}.
 \]
 The proof of the next theorem shows how we can represent this final projection using a mean of unitary conjugates, which   allows us to utilize Lemma \ref{l:conj-const} as done in the proof of  Theorem \ref{t:no-rep-blocks}. The reason for this two-step approach is that it does not seem feasible to represent the projection directly onto the desired subaglebra as a mean of unitary conjugates. 

\begin{theorem}\label{t:gen-eq-const}
Consider $\B_\la \subseteq \M_n$ such that $\langle \la \vdash n\rangle$, where $\la=(n_1^{m_1}, n_2^{m_2}, \ldots, n_L^{m_L})$. Set $r=\sum_{i=1}^{L } m_i$ and $\ell = \mathrm{lcm}\{m_1,m_2, \ldots m_{L }\}$. 
If $X \in \M_n$ is positive,  then    
$$\|P_{\B_\la}(X)\|_{\op} \geq\frac{1}{r\ell}\|X\|_{\op}.$$

Moreover, 
\[
\|X\|_\op \geq  \frac{1}{\sqrt{r\ell}}\|X\|_\op
\]
for all $X \in \M_n.$
\end{theorem}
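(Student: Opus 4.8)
The plan is to follow the two-step strategy sketched before the statement, realizing \emph{each} step as a mean of unitary conjugates so that Lemma \ref{l:conj-const} applies, and then chaining the two resulting estimates. It suffices to prove the positive-element inequality $\|P_{\B_\la}(X)\|_{\op}\geq\frac{1}{r\ell}\|X\|_{\op}$; the ``moreover'' statement then follows at once from Lemma \ref{l:compare} with $\mu=r\ell$ (and note the displayed ``moreover'' inequality is a typo for $\|X\|_{\B_\la}\geq\frac{1}{\sqrt{r\ell}}\|X\|_{\op}$).

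First I would introduce the \emph{expanded} refined partition $\la'$ obtained from $\la$ by writing out each $n_i$ with multiplicity $1$ but repeated $m_i$ times, so that $\la'$ has length $L(\la')=\sum_{i=1}^L m_i=r$ and $\B_{\la'}=\bigoplus_{i=1}^L\bigoplus_{j=1}^{m_i}\M_{n_i}$ is the full block-diagonal algebra in which the $m_i$ copies of $\M_{n_i}$ vary independently. Crucially $\B_\la\subseteq\B_{\la'}\subseteq\M_n$ with compatible coordinate ordering, $\B_\la$ being obtained from $\B_{\la'}$ by forcing the $m_i$ blocks of the $i$th group to coincide. Since all three projections are orthogonal with respect to $\langle\cdot,\cdot\rangle_\tau$ and the subspaces are nested, $P_{\B_\la}=P_{\B_\la}\circ P_{\B_{\la'}}$; writing $Y=P_{\B_{\la'}}(X)$, which is positive (by the proof of Theorem \ref{t:no-rep-blocks} it is a mean of unitary conjugates of the positive $X$), we have $P_{\B_\la}(X)=P_{\B_\la}(Y)$, and Theorem \ref{t:no-rep-blocks} applied to $\la'$ already gives $\|Y\|_{\op}\geq\frac{1}{r}\|X\|_{\op}$.

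The heart of the argument is the second step: representing $P_{\B_\la}(Y)$ as a mean of $\ell$ unitary conjugates of $Y$. Writing the $i$th group of coordinates as $\C^{m_i}\otimes\C^{n_i}$, an element of $\B_{\la'}$ restricts there to $\sum_{j=1}^{m_i}E_{jj}\otimes C_j$ with $C_j\in\M_{n_i}$, and a short trace computation (as in Theorem \ref{t:proj-formula}) shows its $\langle\cdot,\cdot\rangle_\tau$-orthogonal projection onto $\B_\la$ is the block-average $I_{m_i}\otimes\frac{1}{m_i}\sum_j C_j$. I would then let $S_m$ denote the $m\times m$ cyclic shift permutation matrix and set
\[
V=\bigoplus_{i=1}^L (S_{m_i}\otimes I_{n_i}),
\]
a unitary of order dividing $\ell=\mathrm{lcm}\{m_1,\ldots,m_L\}$ since $S_{m_i}^{m_i}=I$ and $m_i\mid\ell$. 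Conjugation by $V^k$ cyclically shifts the blocks within each group, and because $k\mapsto k\bmod m_i$ runs through all residues exactly $\ell/m_i$ times as $k$ ranges over $\{0,\ldots,\ell-1\}$, a direct computation yields $\frac{1}{\ell}\sum_{k=0}^{\ell-1}V^k Y V^{*k}=P_{\B_\la}(Y)$, with the $k=0$ term equal to $Y$ itself. Lemma \ref{l:conj-const} (applied to the positive matrix $Y$ with $\ell$ conjugates) then gives $\|P_{\B_\la}(Y)\|_{\op}\geq\frac{1}{\ell}\|Y\|_{\op}$.

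Chaining the two estimates produces $\|P_{\B_\la}(X)\|_{\op}\geq\frac{1}{\ell}\|Y\|_{\op}\geq\frac{1}{r\ell}\|X\|_{\op}$, completing the positive case. The main obstacle, and the reason for the two-step approach, is exactly that projecting \emph{directly} onto $\B_\la$ does not obviously arise from a mean of unitary conjugates; the key device is the single unitary $V$ of order $\ell$, whose $\ell$-fold conjugation average simultaneously produces the correct block-average in every group in spite of the differing multiplicities $m_i$, at the cost of the factor $\ell$ rather than the individual $m_i$. Verifying that this average equals $P_{\B_\la}(Y)$, and the harmless bookkeeping of the $\ell/m_i$ repetitions, is the only genuinely computational point.
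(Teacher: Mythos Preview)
Your proposal is correct and follows essentially the same two-step approach as the paper: first project onto the expanded block-diagonal algebra $\B_{\la'}$ via Theorem \ref{t:no-rep-blocks} to gain the factor $1/r$, then average over cyclic block-shifts to pass from $\B_{\la'}$ to $\B_\la$ and gain the factor $1/\ell$. Your unitary $V=\bigoplus_i(S_{m_i}\otimes I_{n_i})$ is exactly the paper's $V_1$ (their $V_j$ being your $V^j$), and your justification of $P_{\B_\la}=P_{\B_\la}\circ P_{\B_{\la'}}$ via nested orthogonal projections, together with the observation that $Y$ is positive, makes explicit what the paper leaves to ``a direct computation.''
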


\begin{proof}
  We write $P_{\B_\la}$ as the composition of two maps.  For each $i \in [ r]$, set
\begin{equation}\label{eq:ind-sum}e_i=
\begin{cases}
n_1 & \text{if } 1 \leq i \leq m_1,\\
n_j & \text{if }  2\leq j \leq L \text{ and } 1+\sum_{p=1}^{j-1} m_p \leq i \leq \sum_{p=1}^j m_p,
\end{cases}\end{equation}
that is,   $e_1=n_1, \ \ e_2=n_1, \ \ldots, \ \  e_{m_1}=n_1$, and 
\[e_{m_1+1}=n_2,\  \ e_{m_1+2}=n_2, \ \ldots, \ \ e_{m_1+m_2}=n_2,\]
etc.  Now
 set   $\la'=(e_1, e_2 \ldots, e_r)$ and  note that  $\langle \la' \vdash n\rangle$. 
By Theorem \ref{t:no-rep-blocks}, we have $\|P_{\B_{\la'}}(X)\|_{\op} \geq (1/r)\|X\|_{\op}$ for all positive   $X \in \M_n$. 

For each $i \in [L],$  let  $V_{j,i}$ to be the $n_i m_i \times n_i m_i$ circulant matrix with all zeros in the first row, except for a $1$ in the $(1+j n_i)$th position for $j\in 0, 1, \ldots, m_i-1.$ Then we define $V_j=\bigoplus_{i=1}^k V_{(j \mod m_i),i}$ for $j=0,1,  \ldots, \ell-1$ where $\ell = \mathrm{lcm}\{m_1, m_2 \ldots, m_L\}.$  For any positive  $X \in \M_n$,  define  
$$Q(X)=\frac{1}{l}\sum_{j=1}^{l-1}V_j X V_j^*.$$ 


By Lemma \ref{l:conj-const} $\|Q(X)\|_{\op} \geq (1/\ell)\|X\|_{\op}$ for all positive   $X \in \M_n$. Then, a direct computation along with  Theorem \ref{t:proj-formula} provides   that  $P_{\B_\la}(X)=Q(P_{\B_{\la'}}(X))$, which gives us $$\|P_{\B_\la}(X)\|_{\op} \geq \frac{1}{r\ell} \|X\|_{\op},$$ for any positive  $X \in \M_n.$ The rest follows from   Lemma \ref{l:compare}. \end{proof}
\begin{example}
We calculate the values of $r,\ell$ for the following subalgebras of $\M_5.$ 

   For $\B^5_{3,2}$, we have $r=1+1=2$ and $\ell=\mathrm{lcm} \{1,1\}=1$. Thus $r\ell=2$.

For $\B^5_{2^2,1}$, we have $r=2+1=3$ and $\ell=\mathrm{lcm} \{2,1\}=2$. Thus $r\ell=6$.

For $\B^5_{2,1^2,1}$, we have $r=1+2+1=4$ and $\ell=\mathrm{lcm} \{1,2,1\}=2$. Thus $r\ell=8$.

For $\B^5_{2,1^3}$, we have $r=1+3=4$ and $\ell=\mathrm{lcm} \{1,3\}=3$. Thus $r\ell=12$.

We also note that for the subalgebra $\B^4_{1^3,1} \subseteq \M_4$, we have $r=3+1=4$,  $\ell=\mathrm{lcm}\{3,1\}=3$, and $r\ell=12.$
\end{example}
 
Thus, combining Theorem \ref{t:gen-eq-const}  with Lemma \ref{l:u-equiv-const} and Theorem \ref{t:all-sub}, we have found equivalence constants   for Frobenius--Rieffel norms constructed from any unital C*-subalgebra of $\M_n$ built from natural structure (the dimensions of the terms of the block diagonals of the given subalgebra).

 Table \ref{table:1}  outlines  the equivalence constants for all unital *-subalgebras of $\M_n$ for $1 \leq n \leq 5$. The second column contains   equivalence constants suggested by brute force using software (this was done by making   software calculate the operator and Frobenius--Rieffel norms of many  matrices, and then making a guess), which we think might be the sharp  equivalence constants. The third column contains the theoretical equivalence constant found in  Theorems  \ref{t:no-rep-blocks},   \ref{t:gen-eq-const}.  Our goal in this paper is not to find the sharp equivalence constants, but just explicit ones that afford us some continuity results as mentioned in the first section. It remains an open question to find the sharp constants, and this table suggests that we may have found the sharpest constants in some cases.

\begin{center}
\begin{table}[h]
\begin{tabular}{|c|c|c|} 
 \hline
 Algebra & Guess of Sharp Equiv. Const. & Theorem \ref{t:gen-eq-const} Equiv. Const. \\ 
 \hline
  $\B^3_{2,1}$ & $1/\sqrt{2}$ & $1/\sqrt{2}$ \\  
 $\B^3_{1^2, 1}$ & $1/\sqrt{3}$ & $1/\sqrt{6}$ \\  
  $\B^4_{2,2}$ & $1/\sqrt{2}$ & $1/\sqrt{2} $\\  
  $\B^4_{2^2}$ & $1/\sqrt{4}$ & $1/\sqrt{4} $\\  
 $\B^4_{2,1,1}$ & $1/\sqrt{3}$ & $1/\sqrt{3}  $\\  
$\B^4_{2,1^2}$ & $1/\sqrt{3}$ & $1/\sqrt{6}$ \\  
$\B^4_{1^3,1}$ & $1/\sqrt{4}$ & $1/\sqrt{12}$ \\  
$ \B^4_{1^2,1,1}$ & $ 1/\sqrt{4}$ & $1/\sqrt{8} $\\  
 $ \B^5_{3,2}$ & $1/\sqrt{2}$ & $1/\sqrt{2} $\\ 
 $ \B^5_{2,2,1} $& $1/\sqrt{3}$ & $1/\sqrt{3} $\\  
 $\B^5_{2^2,1}$ & $1/\sqrt{4}$ & $1/\sqrt{6}$ \\  
  $ \B^5_{3,1,1} $& $1/\sqrt{3}$ & $1/\sqrt{3}  $\\  
   $  \B^5_{3,1^2} $& $1/\sqrt{3}$ & $1/\sqrt{6}  $\\  
 $\B^5_{2,1,1,1}$ & $1/\sqrt{4} $& $1/\sqrt{3}$\\  
 $\B^5_{2,1^3}$ & $1/\sqrt{4}$ & $1/\sqrt{12}$ \\  
 $\B^5_{2,1^2,1}$ & $1/\sqrt{4}$ & $1/\sqrt{8}$\\  \hline  
 \end{tabular} 
 \caption{  
 Theorem \ref{t:gen-eq-const} equivalence constants and guesses of sharp equivalence constants}
\label{table:1}
\end{table}
 \end{center}

\subsection{The general case}
We now study the   case of $\bigoplus_{k=1}^N \M_{d_k}$, which is much more involved for two main reasons. First, as seen in Example \ref{e:d-sum-basis}, the canonical basis elements for standard unital C*-subalgebras of $\bigoplus_{k=1}^N \M_{d_k}$ can have non-zero terms in multiple summands, which requires more bookkeeping than the previous section. Second, the Frobenius--Rieffel norms now vary on an extra parameter: the faithful tracial state. In the $\M_n$ case, the only faithful tracial state is $\frac{1}{n} \mathrm{Tr}$, so this was not an issue. For instance, consider $\M_2 \oplus \M_2$ and
the subalgebra
\[
\B=\{ \diag(\mu,\nu)\oplus \diag(\mu,\mu) : \mu,\nu \in \C\}.
\] To build a Frobenius--Rieffel norm on $\M_2 \oplus \M_2$ with respect to $\B$, we also need a faithful tracial state on $\M_2\oplus \M_2$. We could take $\tau_{(1/4,3/4)}$ on $\M_2 \oplus \M_2$ (see Example \ref{e:trace}). Hence, taking into account the expression for the associated conditional expectation of Theorem \ref{t:proj-formula}, we need to keep track of how the coefficients $1/4$ and $3/4$ impact the construction of the Frobenius--Rieffel norm since $\mu$ appears in both summands. Thus, we cannot simply view $\B$ as a subalgebra of $\M_4$ and proceed to use the previous section since we would lose track of the weights since $\M_4$ has a unique faithful tracial state.  The following definition environment allows us to collect all the terms that we use to find our equivalence constants in this much more involved setting. We note that we generalize the
constants $r, \ell$ from Theorem  \ref{t:gen-eq-const}.


\begin{definition}\label{n:gen-form}

Let $\B\subseteq \A=\bigoplus_{k=1}^N \M_{d_k}$ be a standard unital C*-subalgebra, where  for each  $k \in [N]$, we have    $
p_k(\B)=\B^{d_k}_{\la_k}
$ 
with $\langle \la_k \vdash d_k\rangle$. We denote $\mathbf{m}_{\la_k}=(m_{k,1}, m_{k,2}, \ldots, m_{k,L_k})$ and $\mathbf{n}_{\la_k}=(n_{k,1}, n_{k,2}, \ldots, n_{k,L_k})$.
 
 Next, we collect the data we need associated to a given faithful tracial state. Let $\mathbf{v}=(v_1, v_2, \ldots, v_N)\in (0,1)^N$ such that $\sum_{k=1}^N v_k=1$, and  let $\{b_1,b_2,  \ldots, b_M\}$ be the canonical  orthogonal basis for $\B$ given by matrix units.
 
 Define:
 \begin{enumerate}
 \item $\ell=\mathrm{lcm} \left\{m_{k,i} : k\in [N], i \in [L_k]\right\}$,
 \item $r=\mathrm{lcm}\{r_1, r_2 \ldots, r_N\}$,  where $r_k $ is the number of blocks of $\B$ in the $k$th summand of $\A$ for each $k \in [N]$, 
 \item  $m=\mathrm{lcm}\{m_{b_1}, \ldots, m_{b_M}\}$, where $m_{b_i}$ is the number of nonzero entries of the basis element $b_i$ for each $i \in [M]$,
 \item $\alpha=\min\left\{\frac{v_{k}}{d_k} : k \in [ N] \right\},$ and
 \item $\gamma=\max\left\{\sum_{k=1}^N  \frac{\rho_{k,i} v_{k}}{d_k} :   i \in [M]\right\}$, where $\rho_{k,i}$ is the number of times there is a nonzero entry of $b_i$ in the $k$th summand of $\A$ for each $i \in [M]$ and $k \in [N].$
 \end{enumerate}
\end{definition}
 First, we tackle the subalgebras of the form $\cC_\B$ in Definition \ref{d:s-sub}, which recovers Theorem \ref{t:gen-eq-const} when $N=1$.
 
 \begin{theorem}
 \label{thm:gen_eq_simple}
 Consider $\A=\bigoplus_{k=1}^N \M_{d_k}$. For each $k \in [N]$, consider $ \B_{\la_k} \subseteq    \M_{d_k} $ 
such that $\langle \la_k\vdash d_k\rangle$. Set
\[
\B=\bigoplus_{k=1}^N \B_{\la_k}.
\] 
Let $\mathbf{v}=(v_1, v_2, \ldots, v_N)\in (0,1)^N$ such that $ \sum_{k=1}^N v_k=1$. 
 If $X \in \A$ is positive, then 
$$\|P^\mathbf{v}_\B(X)\|_{\op} \geq \frac{1}{r\ell}\|X\|_{\op},$$
and,  moreover,
\[
\frac{1}{\sqrt{r\ell}}\|X\|_{\op} \leq \|X\|_{\mathbf{v},\B}
\]   
for all $X \in \A$.
\end{theorem}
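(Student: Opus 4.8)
The plan is to reduce the statement to the single-summand Theorem \ref{t:gen-eq-const} by exploiting the direct-sum structure of $\B$. First I would observe that, because $\B=\bigoplus_{k=1}^N \B_{\la_k}$, every canonical basis element $B \in \beta_\B$ is a sum of matrix units lying entirely within a single summand $\M_{d_k}$; that is, there is exactly one index $k$ with $\Psi^{(k)}_B \neq \emptyset$. This is precisely the block-confinement recorded in Theorem \ref{t:can-basis}. Substituting it into the formula of Theorem \ref{t:proj-formula}, the weight $\frac{v_k}{d_k}$ appears identically in the numerator and denominator of the coefficient attached to $B$, and therefore cancels. Consequently $P^\mathbf{v}_\B$ is in fact \emph{independent} of $\mathbf{v}$ and decomposes coordinatewise,
\[
P^\mathbf{v}_\B(X) = \bigoplus_{k=1}^N P_{\B_{\la_k}}\big(X^{(k)}\big),
\]
where $P_{\B_{\la_k}} : \M_{d_k} \to \B_{\la_k}$ is the trace-induced conditional expectation on the $k$th summand. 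This is the key point, and it is what makes the theorem recover Theorem \ref{t:gen-eq-const} when $N=1$.

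Next I would apply Theorem \ref{t:gen-eq-const} in each summand. Writing $r_k = \sum_{i=1}^{L_k} m_{k,i}$ for the number of blocks of $\B$ in the $k$th summand and $\ell_k = \mathrm{lcm}\{m_{k,1}, \ldots, m_{k,L_k}\}$, that theorem gives, for each positive $X^{(k)} \in \M_{d_k}$,
\[
\big\|P_{\B_{\la_k}}(X^{(k)})\big\|_{\op} \geq \frac{1}{r_k \ell_k}\,\|X^{(k)}\|_{\op}.
\]
Comparing constants, since $r = \mathrm{lcm}\{r_1, \ldots, r_N\}$ is a multiple of each $r_k$ and $\ell = \mathrm{lcm}\{m_{k,i} : k \in [N], i \in [L_k]\}$ is a multiple of each $\ell_k$, we have $r\ell \geq r_k\ell_k$, hence $\frac{1}{r\ell} \leq \frac{1}{r_k\ell_k}$, for every $k$.

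For positive $X = (X^{(1)}, \ldots, X^{(N)}) \in \A$ I would then combine these bounds using the fact that the norm on $\bigoplus_{k} \M_{d_k}$ is the maximum of the coordinate operator norms (Example \ref{e:fd}). Thus
\[
\|P^\mathbf{v}_\B(X)\|_{\op} = \max_{k \in [N]} \big\|P_{\B_{\la_k}}(X^{(k)})\big\|_{\op} \geq \max_{k \in [N]} \frac{1}{r\ell}\,\|X^{(k)}\|_{\op} = \frac{1}{r\ell}\,\|X\|_{\op},
\]
which is the first inequality. The ``moreover'' statement then follows immediately from Lemma \ref{l:compare} (with $\mu = r\ell$), exactly as at the end of the proof of Theorem \ref{t:gen-eq-const}.

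There is no serious obstacle here: the only thing that needs care is the cancellation of the tracial weights in the first paragraph. The statement looks as though it should depend delicately on $\mathbf{v}$---and in the genuinely general case of Definition \ref{n:gen-form} it will, because basis elements there may straddle several summands---but for the special algebras $\B = \bigoplus_k \B_{\la_k}$ the basis elements are block-confined, so the apparent $\mathbf{v}$-dependence evaporates and the problem splits cleanly across summands. Verifying this block-confinement and the resulting cancellation is the one step I would write out in full; everything else is a direct invocation of the $\M_n$ case together with elementary properties of the least common multiple.
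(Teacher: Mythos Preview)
Your proof is correct but takes a different route from the paper's. Both begin by observing that, since each basis element of $\B=\bigoplus_k \B_{\la_k}$ lives in a single summand, the tracial weights $v_k/d_k$ cancel in the formula of Theorem~\ref{t:proj-formula} and $P^{\mathbf v}_\B$ acts coordinatewise. From there, however, the paper \emph{rebuilds} the unitary-averaging argument of Theorem~\ref{t:gen-eq-const} on the full direct sum: it constructs explicit unitaries $U=(U^{(1)},\ldots,U^{(N)})$ and $V_j=(V_j^{(1)},\ldots,V_j^{(N)})$ in $\A$ and writes $P^{\mathbf v}_\B=P_2\circ P_1$ as a two-step mean of conjugates over $r$ and then $\ell$ terms, applying Lemma~\ref{l:conj-const} twice at the level of $\A$. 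You instead invoke Theorem~\ref{t:gen-eq-const} as a black box in each summand to get $\|P_{\B_{\la_k}}(X^{(k)})\|_\op\ge \tfrac{1}{r_k\ell_k}\|X^{(k)}\|_\op$, then use the max-norm structure of $\A$ and the divisibility $r_k\ell_k\mid r\ell$ to assemble the global bound. Your argument is shorter and more modular, and in fact yields the slightly sharper intermediate constant $\min_k 1/(r_k\ell_k)\ge 1/(r\ell)$. The paper's approach, on the other hand, exhibits the averaging unitaries explicitly on $\A$, which serves as a warm-up for Theorem~\ref{thm:gen_gen_eq} where basis elements genuinely straddle summands and a coordinatewise reduction is no longer available.
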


\begin{proof}
For each  $B\in \beta_\B$, let $k_B \in [N]$ be the   summand where $B$ has a non-zero entry.  Theorem \ref{t:proj-formula} implies that 
$$P^\mathbf{v}_{\B}(A)=\sum_{B \in \beta_\B} \frac{\sum_{(i,j) \in \Psi_{B,k_B}}A_{i,j}^{\left({k_B}\right)}}{|\Psi_{B,k_B}|}B$$
for all $A \in \A. $

We recover $P^\mathbf{v}_{\B}$ using a mean of unitary conjugates in two steps. Let $k \in [N]$. Suppose the $i$th block of $\B_{\la_k}$ has dimension $(e^{(k)}_{i})^2$ (see Expression \eqref{eq:ind-sum}). Set $\la_k'=( e^{(k)}_{1}, e^{(k)}_{2}, \ldots, e^{(k)}_{r_{k}})$  and note that $\langle \la_k' \vdash d_k\rangle$.  Then, let \[U^{(k)}=\bigoplus_{i=1}^{r_k} \omega^{i}I_{e^{(k)}_{{i}}},\] where $\omega$ is a primitive $r_k$th root of unity. 

Note that $U= (U^{(1)}, \ldots, U^{(N)} )$  is unitary as each $U^{(k)}$ is unitary. We then define $P_1:
  \bigoplus_{k=1}^N \A \rightarrow \bigoplus_{k=1}^N \B_{\la_k'}$   by $$P_1\left(X\right)=\bigoplus_{k=1}^N \frac{1}{r}\sum_{i=0}^{r-1}\left(U^{(k)}\right)^{ i \text{ mod } r_k}X^{(k)}\left(\left(U^{(k)}\right)^*\right)^{i \text{ mod } r_k},$$
  where $i \text{ mod } r_k \in \{0, 1, \ldots, r_k-1\}.$ 
   By Lemma \ref{l:conj-const}, we have  $\|P_1(X)\|_{\op} \geq (1/r)\|X\|_{\op}$.

Using the convention for $\mathbf{m}_{\la_k}, \mathbf{n}_{\la_k}$ in Notation \ref{n:gen-form}, we then define, for $k\in [N],i \in [L_k]$, the matrix     $V_{k,j,i}$ to be the $n_{k,i} m_{k,i} \times n_{k,i} m_{k,i}$ circulant matrix with all zeros in the first row, except for a $1$ in the $(1+j n_{k,i})$th position for  $0 \leq j \leq m_{k,i}-1$. 
Set $V_j^{(k)}=\bigoplus_{i=1}^{L_k} V_{k, (j \text{ mod } m_{k,i}),i}$ for $j=0, \ldots, \ell-1$, and let $$V_j=\left(V_j^{(1)}, \ldots, V_j^{(N)}\right).$$ Then  define $P_2:\bigoplus_{k=1}^N \B_{\la_k'} \rightarrow  \B$ by 
$$P_2(X)=\frac{1}{\ell}\sum_{j=0}^{\ell-1}V_j X V_j^*.$$ 
Since $V_0 =I$, we know $\|P_2(X)\|_{\op} \geq (1/\ell)\|X\|_{\op}$ by Lemma \ref{l:conj-const}. 
We also have  that $P^\mathbf{v}_\B =P_2\circ P_1$ by construction. Hence $$\|P^\mathbf{v}_\B(X)\|_{\op} \geq \frac{1}{r\ell} \|X\|_{\op},$$ which completes the proof by   Lemma \ref{l:compare}. \end{proof}

The values of $\mathbf{v}=(v_1, v_2, \ldots, v_N)\in (0,1)^N$ do not appear in the calculations above. This makes sense because the case of Theorem \ref{thm:gen_eq_simple} is essentially the case when $N=1$ since the non-zero entries of a basis element do not appear in multiple summands, and so the different coordinates of $v$ do not appear and we   simply work  with $ \sum_{k=1}^N v_k=1$.  Thus, we now move towards the case when the non-zero entries of our basis elements can appear in multiple summands,  such as in Example \ref{e:d-sum} and as in the subalgebras defined before Theorem \ref{e:c-frac}.  To provide intuition for the following proof, we revisit the example at the beginning of the section.  Consider  
 $\M_2 \oplus \M_2$ and
the C*-subalgebra
\[
\B=\{ \diag(\mu,\nu)\oplus \diag(\mu,\mu) : \mu,\nu \in \C\}.
\]
The first step of the following proof  is to project an $A \oplus B \in \M_2 \oplus \M_2$ onto an element of the form $\diag (a,b) \oplus \diag(c,d) \in \M_2 \oplus \M_2$. Next, in order to project $\diag (a,b) \oplus \diag(c,d)$ into $\B$, we   view $\diag (a,b) \oplus \diag(c,d)$ as $\diag(a,b,c,d) \in \M_4$ and we   view elements of $\B$ as $\diag(\mu,\nu,\mu,\mu)$.   Then we   use a mean of unitary conjugates in $\M_4$ to project $\diag(a,b,c,d)$ to an element of the form $\diag(\mu,\nu,\mu,\mu)$, which  is an element in $\B$. To form the unitaries, begin with $W_1=I_4$. Next, since the  $(1,1)$-entry in $\diag(\mu,\nu,\mu,\mu)$ repeats in the $(3,3)$-entry and $(4,4)$-entry,   we permute the first, third, and fourth column of $W_1=I_4$ two times to get two more unitaries \[
W_2=\begin{bmatrix}
0 & 0 & 1 &0 \\
0 & 1 & 0 & 0 \\
0 & 0 & 0 & 1\\
1 & 0 & 0 & 0
\end{bmatrix} \quad \quad \text{ and } \quad \quad 
W_3=\begin{bmatrix}
0 & 0 & 0 &1 \\
0 & 1 & 0 & 0 \\
1 & 0 & 0 & 0\\
0 & 0 & 1 & 0
\end{bmatrix}.
\]
If we permute these columns one more time,  then we obtain $I_4$. 
Note that \[\sum_{i=1}^3 W_i \diag(a,b,c,d)W_i^* \in \B.\]  Using Definition \ref{n:gen-form}, note that $m=\mathrm{lcm}\{3,1\}=3$ since the standard basis elements of $\B$ are $\diag(1,0,1,1)$ and $\diag(0,1,0,0)$.

\begin{theorem}
 \label{thm:gen_gen_eq}
  Let $\mathbf{v}=(v_1, v_2, \ldots, v_N)\in (0,1)^N$ such that $ \sum_{k=1}^N v_k=1$.  Let $\B$ be a standard unital C*-subalgebra of $\A=\bigoplus_{k=1}^N\M_{d_k}$. If $X\in \A$ is positive, then   
$$\frac{\alpha}{r\ell m \gamma}\|X\|_{\op} \leq \|P_\B^\mathbf{v}(X)\|_{\op},$$
and, moreover,
\[\frac{\sqrt{\alpha}}{\sqrt{ r\ell m\gamma}}\|X\|_{\op} \leq \|X\|_{\mathbf{v},\B}\]
for all $X \in \A$.
\end{theorem}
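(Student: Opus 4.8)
The plan is to reduce to positive elements via Lemma \ref{l:compare}, factor the conditional expectation $P^\mathbf{v}_\B$ through the intermediate subalgebra $\cC_\B=\bigoplus_{k=1}^N\B_{\la_k}$ of Definition \ref{d:s-sub}, and then compare the remaining step against an auxiliary \emph{uniform} conditional expectation amenable to Lemma \ref{l:conj-const}. By Lemma \ref{l:compare} it suffices to prove the positive-element inequality $\|P^\mathbf{v}_\B(X)\|_{\op}\geq\frac{\alpha}{r\ell m\gamma}\|X\|_{\op}$; the bound on $\|X\|_{\mathbf{v},\B}$ then follows with $\mu=r\ell m\gamma/\alpha$.

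Fix positive $X\in\A$. Since $\B\subseteq\cC_\B\subseteq\A$ and all of the relevant maps are $\tau_\mathbf{v}$-preserving conditional expectations, the tower property gives $P^\mathbf{v}_\B=P^\mathbf{v}_\B\circ P^\mathbf{v}_{\cC_\B}$ (concretely, each $B\in\beta_\B$ lies in $\cC_\B$, so in Theorem \ref{t:proj} one has $\langle X,B\rangle_{\tau_\mathbf{v}}=\langle P^\mathbf{v}_{\cC_\B}(X),B\rangle_{\tau_\mathbf{v}}$). Put $Y=P^\mathbf{v}_{\cC_\B}(X)$, which is positive; because $\cC_\B$ has exactly the form treated in Theorem \ref{thm:gen_eq_simple}, with the same $r$ and $\ell$, I get $\|Y\|_{\op}\geq\frac{1}{r\ell}\|X\|_{\op}$. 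It remains to prove $\|P^\mathbf{v}_\B(Y)\|_{\op}\geq\frac{\alpha}{m\gamma}\|Y\|_{\op}$ for positive $Y\in\cC_\B$.

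To this end I would introduce the uniform conditional expectation $Q$ onto $\B$, namely the one associated with the weights $v_k/d_k$ all made equal. Mirroring the circulant/permutation construction in the proofs of Theorems \ref{t:no-rep-blocks} and \ref{thm:gen_eq_simple}, but now with permutation unitaries $W_0=I,W_1,\dots,W_{m-1}$ that cyclically shift the repeated copies of a block even across different summands of $\A$ (exactly as in the $\M_2\oplus\M_2$ example preceding the theorem), one checks that $Q(Y)=\frac{1}{m}\sum_{t=0}^{m-1}W_tYW_t^*$, so Lemma \ref{l:conj-const} yields $\|Q(Y)\|_{\op}\geq\frac{1}{m}\|Y\|_{\op}$. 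The key structural observation, read off from Theorem \ref{t:proj-formula} together with Theorem \ref{t:can-basis}, is that on each simple summand $\M_{s_j}$ of $\B$ both $P^\mathbf{v}_\B$ and $Q$ are weighted averages of the \emph{same} family of positive semidefinite $s_j\times s_j$ matrices $M^{(k)}_i$, the diagonal sub-blocks of $Y$ sitting at the copies of that summand. Writing $w_{k,i}=\frac{v_k}{d_k}m_{k,i}$ and $u_{k,i}=m_{k,i}$ over the index set of those copies, one has $P^\mathbf{v}_\B(Y)|_j=\frac{1}{D^{(j)}}\sum w_{k,i}M^{(k)}_i$ and $Q(Y)|_j=\frac{1}{\sum u_{k,i}}\sum u_{k,i}M^{(k)}_i$, where $D^{(j)}=\sum w_{k,i}\leq\gamma$ and $\sum u_{k,i}\geq 1$.

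The comparison is then immediate from positivity: since $\frac{v_k}{d_k}\geq\alpha$ we get $w_{k,i}\geq\alpha u_{k,i}$, hence $\sum w_{k,i}M^{(k)}_i\geq\alpha\sum u_{k,i}M^{(k)}_i$ in the positive semidefinite order, and monotonicity of the operator norm on positive matrices gives $\|P^\mathbf{v}_\B(Y)|_j\|_{\op}\geq\frac{\alpha}{D^{(j)}}\|\sum u_{k,i}M^{(k)}_i\|_{\op}\geq\frac{\alpha}{\gamma}\|Q(Y)|_j\|_{\op}$. Choosing the summand $j$ maximizing $\|Q(Y)|_j\|_{\op}$ yields $\|P^\mathbf{v}_\B(Y)\|_{\op}\geq\frac{\alpha}{\gamma}\|Q(Y)\|_{\op}\geq\frac{\alpha}{\gamma m}\|Y\|_{\op}$, and combining with the first step finishes the positive-element bound. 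I expect the main obstacle to be organizational rather than conceptual: verifying that $D^{(j)}$ (equivalently the counts $\rho_{k,j}$) depends only on the summand $j$ and not on the individual matrix unit, and setting up the index set of copies carefully enough that $P^\mathbf{v}_\B$ and $Q$ genuinely average the \emph{same} blocks $M^{(k)}_i$; the permutation unitaries realizing $Q$ must also be written down explicitly enough to guarantee $W_0=I$, so that Lemma \ref{l:conj-const} applies.
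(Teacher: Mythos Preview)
Your proposal is correct and follows essentially the same route as the paper: factor $P^\mathbf{v}_\B$ through $\cC_\B$ via Theorem \ref{thm:gen_eq_simple}, embed in $\M_d$ with $d=\sum_k d_k$, and handle the cross-summand repetitions by averaging over $m$ permutation unitaries as in the example preceding the theorem. The only cosmetic difference is the order of operations in the last step: the paper first rescales the $k$th summand of $P^\mathbf{v}_{\cC_\B}(X)$ by $v_k/d_k$ (extracting the factor $\alpha$), then permutation-averages, and finally compares to $P^\mathbf{v}_\B$ to pick up $1/\gamma$; you instead form the uniform expectation $Q$ first and then compare $P^\mathbf{v}_\B$ to $Q$ via the positive-semidefinite order, which packages $\alpha$ and $\gamma$ into a single clean inequality.
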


\begin{proof} 
For $\mathcal{C}_\B$ as defined in Definition \ref{d:s-sub}, we have $\|P^\mathbf{v}_{\mathcal{C}_\B}(X)\|_{\op} \geq (1/(r\ell))\|X\|_{\op}$ for positive   $X \in \A$ by Theorem \ref{thm:gen_eq_simple}.

We then define $$P'(X)=\bigoplus_{k=1}^N \frac{v_{k}}{a_k}P_{\mathcal{C}_\B}^\mathbf{v}(X)^{(k)},$$ which gives us $\|P'(X)\|_{\op} \geq \frac{\alpha}{r\ell} \|X\|_{\op}$ for  all positive   $X \in \bigoplus_{k=1}^N\M_{d_k}$.

Suppose $e_k^2$ is the dimension of the $k$th block of $\B$ and $b$ is the total number of blocks of $\B$. For the following, we view $\B$ and $\A$ as subalgebras of $\M_{d} $, where $d= \sum_{k=1}^N d_k$.
Let  $$W_1= \bigoplus_{k=1}^b I_{e_k}=I_d.$$ 
We construct $W_2$ by permuting the blocks of $W_1$ in the following way.  If the $k$th block of $\B$ is   not repeated, then fix $I_{e_k}$. Next, assume that the  $k$th block  of $\B$ is repeated and that the $k$th block is the first position this repeated block appears. Assume that the $j$th block is the next block to the right that the the $k$th block is repeated. Then   $I_{e_k}$ stays  in the same rows it occupied in $W_1$, but its columns permute to the columns (in $\M_{d} $) of the $j$th block in $\B$. If the $j$th block is repeated, then repeat this process with $I_{e_j}$. However, if  the $j$th block is not repeated, then permute the columns $I_{e_j}$ to the columns of the $k$th block. Continue in this way until all blocks are either permuted or fixed depending on repetition or lack thereof, which gives us $W_2$.  Repeat this process to make $W_3, W_4,\ldots, W_m$, where $m$ is defined in  (3) of Definition \ref{n:gen-form} (see the example before the statement of the theorem).  Note that $W_{m+1}=I_d$.  Define
$f:\mathcal{C}_\B \rightarrow  \B$   by $$f(X)=\frac{1}{m} \sum_{i=1}^m W_i X W_i^*,$$ which satisfies $$\|f(P'(X))\|_{\op} \geq \frac{\alpha}{r\ell m }\|X\|_{\op}$$ for all positive  $X \in  \cC_\B$ by Lemma \ref{l:conj-const}. 

Finally, by Theorem \ref{t:proj-formula} and a direct computation, we have that 
\[
\|P^\mathbf{v}_\B(X)\|_{\op}=\frac{1}{\gamma}\|f(P'(X))\|_{\op}.  
\]We conclude that   
$$\|P^\mathbf{v}_\B(X)\|_{\op} \geq \frac{\alpha}{r\ell m \gamma}\|X\|_{\op}$$ for all positive  $X \in \A$. Lemma \ref{l:compare} completes the proof. \end{proof}

We can use the previous theorem to find   equivalence constants for all unital *-subalgebras $\B \subseteq \bigoplus_{k=1}^N \M_{d_k}$ by Lemma \ref{l:u-equiv-const}.
 
 \section{An application to Effros--Shen algebras}
 To finish, we now apply our main result to the finite-dimensional C*-algebras
in the inductive sequence used by Effros and Shen in the construction
of their AF algebras from the continued fraction expansion of irrational
numbers  \cite[Section VI.3]{Davidson}, \cite{Effros80b}. These algebras provide a suitable example to test our results. Indeed, in \cite{Aguilar-Latremoliere15}, it was shown that the Effros--Shen algebras vary continuously with respect to their irrational parameters in a noncommutative analogue to the Gromov--Hausdorff distance, called the dual Gromov--Hausdorff propinquity \cite{Latremoliere13b}. A crucial part of this result is the fact that each Effros--Shen algebra comes equipped with a unique faithful tracial state and that the faithful tracial states themselves vary continuously  with respect to the irrational parameters. Therefore, to test our results in the previous section, we will see that  for the Frobenius--Rieffel norms that are  built from  these faithful tracial states,   this continuity passes through to the equivalence constants. This   further displays how far-reaching  the information of the irrational parameters appears in structures related to the Effros--Shen algebras.   
 
 First, given an irrational $\theta \in (0,1)$, the Effros--Shen algebras are built using the   continued fraction expansion of $\theta$. The continuity results in   \cite{Aguilar-Latremoliere15} were established using the {\em Baire space}, a metric space that is homeomorphic to $(0,1)\setminus \Q$ with its usual topology. The Baire space  is the set of positive integer sequences, which is in one-to-one correspondence with $(0,1)\setminus \Q$ via the continued fraction expansion, equipped with the Baire metric. We begin reviewing continued fractions and the Baire space. Background on continued fractions can be found in many introductory number theory texts, such as  \cite{Hardy38}.

 Let $\theta \in \R$ be irrational. There exists a unique sequence of   integers $(r^\theta_n)_{n \in \N_0}$ (where $\N_0=\{0\}\cup \N$) with $r^\theta_n>0$ for all $n \in \N$ such that 
 \[
 \theta =\lim_{n \to \infty} r_0^\theta +\cfrac{1}{r^\theta_1 + \cfrac{1}{r^\theta_2 + \cfrac{1}{r^\theta_3 +\cfrac{1}{\ddots+\cfrac{1}{r^\theta_n}}}}}.
 \]
 When $\theta \in (0,1)$, we have that $r^\theta_0=0$. The sequence $(r^\theta_n)_{n \in \N_0}$ is called the {\em continued fraction expansion of $\theta$}. 
 
 To define the Baire space, first let $\mathcal{N}$ denote the set of positive integer sequences. The  Baire metric $d_B$ on $\mathcal{N}$ is defined by 
 \[
 d_B(x,y)=\begin{cases}
 0 & \text{ if  } x=y,\\
 2^{-\min \{ n \in \N : x_n\neq y_n\}} & \text{ if } x \neq y.
 \end{cases}
 \]  The  metric space $(\mathcal{N}, d_B)$ is   the {\em Baire space}.  In particular, the distance in the Baire metric between two positive integer sequences   is less than   $2^{-n}$  if and only if their terms  agree up to $n$. We now state the following well-known result in the descriptive set theory literature.
 \begin{proposition}[\!{\cite[Proposition 5.10]{Aguilar-Latremoliere15}}]\label{p:baire}
 The map
 \[
 \theta \in (0,1)\setminus \Q \mapsto (r^\theta_n)_{n \in \N} \in \mathcal{N}
 \]
 is a homeomorphism with respect to the usual topology on $\R$ and the Baire metric.
 \end{proposition}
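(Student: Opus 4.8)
The plan is to verify directly the three ingredients of a homeomorphism: that the map $\Phi\colon \theta \mapsto (r^\theta_n)_{n\in\N}$ is a bijection from $(0,1)\setminus\Q$ onto $\mathcal{N}$, that it is continuous, and that its inverse is continuous. Bijectivity is the classical existence-and-uniqueness theorem for infinite continued fractions: every irrational in $(0,1)$ has a unique expansion with all partial quotients in $\N$, and conversely every sequence $(a_n)_{n\in\N}\in\mathcal{N}$ determines such an irrational as the limit of its convergents. I would quote this from a standard reference such as \cite{Hardy38} rather than reprove it.

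For continuity I would work with cylinder sets. For a finite word $(a_1,\dots,a_n)\in\N^n$, note that $d_B(x,y)<2^{-n}$ precisely when $x$ and $y$ agree in their first $n$ entries, so the cylinders $[a_1,\dots,a_n]=\{x\in\mathcal{N}:x_i=a_i,\ 1\le i\le n\}$ form a basis for the Baire topology. The key point is that $\Phi^{-1}([a_1,\dots,a_n])$ is exactly the set of irrationals lying in the open interval $I_n$ whose endpoints are the consecutive mediants $p_n/q_n$ and $(p_n+p_{n-1})/(q_n+q_{n-1})$, where $p_k/q_k$ are the convergents determined by $(a_1,\dots,a_n)$. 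Since each such preimage is the irrational part of an open interval, $\Phi$ is continuous.

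For continuity of $\Phi^{-1}$ I would use the determinant identity $p_nq_{n-1}-p_{n-1}q_n=(-1)^{n-1}$, which gives that the length of $I_n$ is
\[
\frac{1}{q_n(q_n+q_{n-1})}.
\]
Because the denominators obey $q_n=a_nq_{n-1}+q_{n-2}$ and hence grow without bound, these lengths shrink to $0$. Thus, given an irrational $\theta$ and $\varepsilon>0$, there is an $n$ with $\theta\in I_n\subseteq(\theta-\varepsilon,\theta+\varepsilon)$, so the sets $I_n\cap(\R\setminus\Q)$ form a neighbourhood basis at every irrational, and in particular a basis for the subspace topology. A continuous bijection carrying a basis of its domain bijectively onto a basis of its codomain is a homeomorphism, and $\Phi$ does exactly this, sending each $I_n\cap(\R\setminus\Q)$ onto the cylinder $[a_1,\dots,a_n]$.

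I expect the main obstacle to be the bookkeeping in the forward direction: confirming that the preimage of a cylinder is genuinely the irrational part of the stated open interval requires the monotonic dependence of the finite continued fraction $[0;a_1,\dots,a_n,t]=(p_nt+p_{n-1})/(q_nt+q_{n-1})$ on its tail $t\in(1,\infty)$, together with care at the rational endpoints, which must be excluded. Once the interval description and the length estimate are established, both continuity statements reduce to the single fact that $q_n\to\infty$.
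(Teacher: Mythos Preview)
The paper does not supply its own proof of this proposition: it is stated as a well-known fact from descriptive set theory and simply cited from \cite{Aguilar-Latremoliere15}. Your proposal is therefore not a comparison against an existing argument but a self-contained proof filling in what the paper omits.

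Your argument is correct and follows the standard route. The bijectivity is exactly the classical theorem on infinite continued fractions, and your identification of $\Phi^{-1}([a_1,\dots,a_n])$ with the irrational points of the fundamental interval bounded by $p_n/q_n$ and $(p_n+p_{n-1})/(q_n+q_{n-1})$ is the right computation; the length formula $1/\bigl(q_n(q_n+q_{n-1})\bigr)$ then gives that these intervals form a neighbourhood basis at each irrational, which yields continuity of $\Phi^{-1}$. One minor remark: the sentence ``a continuous bijection carrying a basis of its domain bijectively onto a basis of its codomain is a homeomorphism'' is slightly loose as stated---what you actually use is that $\Phi$ maps each basic open set $I_n\cap\bigl((0,1)\setminus\Q\bigr)$ \emph{onto} the basic open cylinder $[a_1,\dots,a_n]$, so that $\Phi$ is an open map; this is precisely what makes $\Phi^{-1}$ continuous. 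It would be cleaner to phrase it that way. The bookkeeping you flag (monotonicity in the tail variable and exclusion of the rational endpoints) is routine and well documented in \cite{Hardy38}.
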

 Thus, convergence of a sequence of irrationals    to an irrational  in the usual topology on $\R$ can be expressed in terms of their continued fraction expansions using the topology induced by the Baire metric.

 Next, we  define the finite-dimensional  C*-subalgebras of the Effros--Shen algebras.
 For each $n \in \N$, define \[
p_0^\theta=r_0^\theta, \quad p_1^\theta=1 \quad \text{ and } \quad q_0^\theta=1, \quad q_1^\theta=r^\theta_1
\]
and   set 
\[
p_{n+1}^\theta=r^\theta_{n+1} p_{n}^\theta+p_{n-1}^\theta
\]
and 
\[
q_{n+1}^\theta= r^\theta_{n+1} q_{n}^\theta+q_{n-1}^\theta.
\]
The sequence $\left(p_{n}^\theta/q_{n}^\theta\right)_{n \in \mathbb{N}_0}$ of {\em convergents} $p^\theta_n/q^\theta_n$ converges to $\theta$. In fact, for each $n \in \N$, 
\[
 \frac{p_n^\theta}{q_n^\theta}=r_0^\theta +\cfrac{1}{r^\theta_1 + \cfrac{1}{r^\theta_2 + \cfrac{1}{r^\theta_3 +\cfrac{1}{\ddots+\cfrac{1}{r^\theta_n}}}}}.
\]
 
 We now define the C*-algebras with which we endow  Frobenius--Rieffel norms. We set $\mathcal{A}_{\theta,0}=\mathbb{C}$ and, for each $n \in \mathbb{N}_0$, we set
\[
\mathcal{A}_{\theta,n}=\M_{q_n^\theta} \oplus \M_{q_{n-1}^\theta}.
\]
For the subalgebras, define 
 \begin{equation}\label{eq:theta-alg}
\alpha_{\theta,n}:A\oplus B \in \mathcal{A}_{\theta,n} \mapsto   \diag\left(A, \ldots, A,B \right)  \oplus A \in \mathcal{A}_{\theta,n+1},
\end{equation}
where there are $r^\theta_{n+1}$ copies of $A$ on the diagonal in the first summand of $\mathcal{A}_{\theta,n+1}$. This   is a unital *-monomorphism by construction.  For $n=0$,
\[\alpha_{\theta, 0}: \lambda \in  \mathcal{A}_{\theta,0} \mapsto   \diag(\lambda, \ldots, \lambda)  \oplus \lambda\ \in \mathcal{A}_{\theta,1}.
\]
For each $n \in \mathbb{N}_0$, set 
\[
\mathcal{B}_{\theta,n+1}=\alpha_{\theta,n}(\mathcal{A}_{\theta,n}),
\]
which is a standard unital C*-subalgebra of $\mathcal{A}_{\theta,n+1}$.

To complete the construction of the Frobenius--Rieffel norm, we need to define a faithful tracial state. We begin with \[t(\theta,n)=(-1)^{n-1}q_n^\theta  (\theta q_{n-1}^\theta -p_{n-1}^\theta ) \in (0,1).\]
Then  set
\[ \mathbf{v}_{\theta,n}=(t(\theta,n),1-t(\theta,n)),\]
so   for all $(A,B) \in \mathcal{A}_{\theta,n}$, we have
\[
\tau_{v_{\theta,n}}(A,B)=t(\theta,n)\frac{1}{q_n^\theta}\mathrm{Tr}(A)+(1-t(\theta,n))\frac{1}{q_{n-1}^\theta}\mathrm{Tr}(B).
\]
For each $n \in \mathbb{N}$, the Frobenius--Rieffel norm on $\A_{\theta, n}$ associated to $ \mathbf{v}_{\theta,n}$ and to the unital C*-subalgebra $\B_{\theta,n}$ is denoted by 
\[ \|\cdot \|_{\mathbf{v}_{\theta,n},\mathcal{B}_{\theta,n}}.
\]  
 
 We conclude the paper with the following theorem, which shows that the equivalence constants we found in this paper are natural in the sense that they reflect the established continuity of the Effros-Shen algebras with respect to their irrational parameters.
 \begin{theorem}\label{e:c-frac}
 Let $\theta \in (0,1) \setminus \Q$ and $N \in \N$. Then 
 \[
 \sqrt{\frac{\theta q^\theta_N - p^\theta_N}{\left( \theta q^\theta_{N-2}-p^\theta_{N-2}\right)r^\theta_N (r^\theta_N+1)^2}}\cdot \|a\|_\mathrm{op} \leq \|a\|_{\mathbf{v}_{\theta,N},\mathcal{B}_{\theta,N}}\leq \|a\|_\mathrm{op}
 \]
 for all $a \in \A_{\theta, N}$. If $(\theta_n)_{n \in \N}$ is a sequence in $(0,1) \setminus \Q$ converging to some $\theta \in (0,1) \setminus \Q$, then 
 \[
 \lim_{n \to \infty} \frac{\theta_n q^{\theta_n}_N - p^{\theta_n}_N}{\left( \theta q^{\theta_n}_{N-2}-p^{\theta_n}_{N-2}\right)r^{\theta_n}_N (r^{\theta_n}_N+1)^2} =  \frac{\theta q^\theta_N - p^\theta_N}{\left( \theta q^\theta_{N-2}-p^\theta_{N-2}\right)r^\theta_N (r^\theta_N+1)^2}.
 \]
 \end{theorem}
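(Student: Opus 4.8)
The plan is to derive the lower bound by specializing the general estimate of Theorem \ref{thm:gen_gen_eq} to the subalgebra $\B_{\theta,N}\subseteq\A_{\theta,N}=\M_{q_N^\theta}\oplus\M_{q_{N-1}^\theta}$ and then to evaluate the abstract constant $\alpha/(r\ell m\gamma)$ in closed form using continued-fraction identities; the upper bound $\|a\|_{\mathbf{v}_{\theta,N},\B_{\theta,N}}\leq\|a\|_\op$ is the universal estimate recorded at the start of Section 4. First I would read off the standard structure of $\B_{\theta,N}=\alpha_{\theta,N-1}(\A_{\theta,N-1})$: an element has the form $\diag(A,\dots,A,B)\oplus A$ with $r_N^\theta$ copies of $A\in\M_{q_{N-1}^\theta}$ and one copy of $B\in\M_{q_{N-2}^\theta}$, so that the first summand gives $p_1(\B_{\theta,N})=\B^{q_N^\theta}_{\la_1}$ with $\la_1=((q_{N-1}^\theta)^{r_N^\theta},q_{N-2}^\theta)$ (this is exactly the defining recursion $q_N^\theta=r_N^\theta q_{N-1}^\theta+q_{N-2}^\theta$), while $p_2(\B_{\theta,N})=\M_{q_{N-1}^\theta}$ is a single full block. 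Since $\A_{\theta,N}$ has exactly two direct summands, when applying Definition \ref{n:gen-form} its ``$N$'' is played by $2$, with weights $v_1=t(\theta,N)$, $v_2=1-t(\theta,N)$ and dimensions $d_1=q_N^\theta$, $d_2=q_{N-1}^\theta$.

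From this description I would compute the five quantities of Definition \ref{n:gen-form}. The block multiplicities are $r_N^\theta,1$ in the first summand and $1$ in the second, giving $\ell=r_N^\theta$; the block counts are $r_N^\theta+1$ and $1$, giving $r=r_N^\theta+1$; the canonical basis elements coming from $\M_{q_{N-1}^\theta}$ occupy $r_N^\theta+1$ entries (once in each diagonal copy of $A$ and once in the second summand) while those from $\M_{q_{N-2}^\theta}$ occupy a single entry, giving $m=r_N^\theta+1$. Hence $r\ell m=r_N^\theta(r_N^\theta+1)^2$, which already produces the denominator of the claimed constant.

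The remaining factor $\alpha/\gamma$ is where the continued-fraction bookkeeping enters, and this is the step I expect to carry the real content. Writing $D_n=\theta q_n^\theta-p_n^\theta$, I would combine the determinant identity $p_n^\theta q_{n-1}^\theta-p_{n-1}^\theta q_n^\theta=(-1)^{n-1}$ with the recursion $D_{n+1}=r_{n+1}^\theta D_n+D_{n-1}$ and the alternating sign $\operatorname{sgn}(D_n)=(-1)^n$ to derive the two needed facts: first, $q_n^\theta|D_{n-1}|+q_{n-1}^\theta|D_n|=1$, which turns $t(\theta,N)=(-1)^{N-1}q_N^\theta D_{N-1}$ into $t(\theta,N)=q_N^\theta|D_{N-1}|$ and $1-t(\theta,N)=q_{N-1}^\theta|D_N|$; second, the recursion $|D_{N-2}|=|D_N|+r_N^\theta|D_{N-1}|$. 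Then $v_1/d_1=|D_{N-1}|$ and $v_2/d_2=|D_N|$, and since $|D_n|$ is strictly decreasing I obtain $\alpha=\min\{|D_{N-1}|,|D_N|\}=|D_N|$ and $\gamma=r_N^\theta|D_{N-1}|+|D_N|=|D_{N-2}|$ (the maximum being attained on the $\M_{q_{N-1}^\theta}$-basis elements). As $D_N$ and $D_{N-2}$ share the sign $(-1)^N$, this yields $\alpha/\gamma=|D_N|/|D_{N-2}|=(\theta q_N^\theta-p_N^\theta)/(\theta q_{N-2}^\theta-p_{N-2}^\theta)$, and substituting into Theorem \ref{thm:gen_gen_eq} gives exactly the stated lower bound.

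For the continuity statement I would invoke Proposition \ref{p:baire}: if $\theta_n\to\theta$ in $(0,1)\setminus\Q$, then their continued fraction expansions converge in the Baire metric, so for each fixed index $k$ one has $r_k^{\theta_n}=r_k^\theta$ for all large $n$. Since $p_k^\theta,q_k^\theta$ are determined by a finite recursion in $r_1^\theta,\dots,r_k^\theta$, all of $r_N^{\theta_n},p_j^{\theta_n},q_j^{\theta_n}$ with $j\leq N$ are eventually equal to their $\theta$-counterparts, and the only genuinely varying quantity in the displayed ratio is the scalar $\theta_n\to\theta$. Because the denominator stabilizes to the nonzero value $(\theta q_{N-2}^\theta-p_{N-2}^\theta)r_N^\theta(r_N^\theta+1)^2$ (nonzero since $\theta$ is irrational), the limit follows at once. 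The main obstacle is thus not conceptual but the careful sign- and recursion-tracking in the third paragraph needed to collapse $\alpha/\gamma$ to the clean continued-fraction ratio.
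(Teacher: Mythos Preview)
Your proposal is correct and follows essentially the same route as the paper's own proof: both specialize Theorem~\ref{thm:gen_gen_eq} to $\B_{\theta,N}\subseteq\A_{\theta,N}$, compute the five quantities of Definition~\ref{n:gen-form} to obtain $\ell=r_N^\theta$, $r=m=r_N^\theta+1$, $\alpha=(-1)^N(\theta q_N^\theta-p_N^\theta)$, and $\gamma=(-1)^{N-2}(\theta q_{N-2}^\theta-p_{N-2}^\theta)$, and then invoke Proposition~\ref{p:baire} for the continuity claim. Your write-up is in fact more self-contained than the paper's, which cites \cite[Lemma~5.5]{Aguilar-Latremoliere15} for the identification of $\alpha$, whereas you supply the continued-fraction identities $q_n|D_{n-1}|+q_{n-1}|D_n|=1$ and $|D_{N-2}|=r_N^\theta|D_{N-1}|+|D_N|$ directly.
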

 \begin{proof}
 First, we gather the necessary information from the canonical basis of $\B_{\theta, n}$ given by matrix units. Let \[\beta_{\theta,n}=\left\{b_1, \ldots, b_{(q_{n-1}^\theta)^2}\right\}\] be the set of basis elements that span elements of the form $\alpha_{\theta,n-1}(A,0)\in \B_{\theta,n}$. Let
\[
\beta'_{\theta,n}=\left\{b_{(q_{n-1}^\theta)^2+1}, \ldots, b_{(q_{n-1}^\theta)^2+(q_{n-2}^\theta)^2}\right\}
\]
be the set of basis elements that span elements of the form $\alpha_{\theta,n-1}(0,B)\in \B_{\theta,n}$. Note for $n=1$, we have $\beta'_{\theta,n}=\emptyset$. Thus, the canonical basis for $\B_{\theta,n}$ is
\[
\beta_{\B_{\theta,n}}=\beta_{\theta,n}\cup \beta'_{\theta,n}. 
\]
Using Definition \ref{n:gen-form}, we have \[\ell(\theta,n)=\mathrm{lcm}\{r^\theta_{n},1,1\}=r^\theta_{n }\] and \[r(\theta,n)=\mathrm{lcm}\{r^\theta_{n}+1,1\}=r^\theta_{n}+1.\]
Next
\[
m(\theta,n)=\mathrm{lcm}\{r^\theta_{n}+1, 1\}=r^\theta_{n}+1
\]
and 
\begin{align*}
\alpha(\theta,n)&=\min \left\{ (-1)^{n-1}\left(\theta q^\theta_{n-1}-p^\theta_{n-1}\right), (-1)^n \left( \theta q^\theta_n - p^\theta_n\right)\right\}\\
&= (-1)^n \left( \theta q^\theta_n - p^\theta_n\right),
\end{align*}
where the second term is given at the end of the proof of \cite[Lemma 5.5]{Aguilar-Latremoliere15}, and finally
\begin{align*}
\gamma(\theta,n) 
&=\max \Big\{r^\theta_n \cdot (-1)^{n-1}\left(\theta q^\theta_{n-1}-p^\theta_{n-1}\right)+(-1)^n \left( \theta q^\theta_n - p^\theta_n\right),\\
& \quad \quad \quad \quad \quad  (-1)^{n-1}\left(\theta q^\theta_{n-1}-p^\theta_{n-1}\right) \Big\}\\
&=r^\theta_n \cdot (-1)^{n-1}\left(\theta q^\theta_{n-1}-p^\theta_{n-1}\right)+(-1)^n \left( \theta q^\theta_n - p^\theta_n\right)\\
& = (-1)^{n-2}(\theta q^\theta_{n-2}-p^\theta_{n-2}).
\end{align*}
Thus, we conclude that the equivalence constant of Theorem  \ref{thm:gen_gen_eq} is
\begin{equation}\label{eq:es-cont}\sqrt{\frac{\theta q^\theta_n - p^\theta_n}{\left( \theta q^\theta_{n-2}-p^\theta_{n-2}\right)r^\theta_n (r^\theta_n+1)^2}}
.\end{equation}
 
Next, by Proposition \ref{p:baire}, for fixed $n \in \N$,   there exists $\delta>0$ such that if $\eta \in (0,1)\setminus \Q$ and $|\theta-\eta|<\delta$, then $r_m^\theta=r_m^\eta$  for all $m \in \{0, \ldots, n+1\}$, and thus the same holds for $p_m^\theta=p_m^\eta$ and $  q_m^\theta=q_m^\eta$.   In particular, for  irrational $\theta$,     \eqref{eq:es-cont} is continuous in $\theta$.
 \end{proof}
 
\bibliographystyle{amsplain}
\bibliography{thesis}
\vfill

\end{document}